\documentclass[a4paper,12pt]{article}
\usepackage{graphicx} %
\usepackage{amsxtra, amsbsy,amsgen,amsopn,  amsfonts, amssymb, amsthm}
\usepackage{mathrsfs}
\usepackage{ascmac} 
\usepackage{fancybox}
   \newtheorem{thm}{Theorem}[section]%
    \newtheorem{prop}[thm]{Proposition}%
    \newtheorem{cor}[thm]{Corollary}%
    \newtheorem{lem}[thm]{Lemma}%
    \newtheorem{rem}[thm]{Remark}%

    \numberwithin{equation}{section}
\begin{document}
	\begin{center}
		{\bf 	Another approach to WKB analysis}  \\    
					Sunao {\sc Ouchi} 
		\footnote{ Sophia Univ. Tokyo Japan,  e-mail s\_ouchi@sophia.ac.jp 
    \par \quad Key words and Phrases: WKB analysis, Singular perturbation,  Asymptotic analysis.
	\par \quad 2020 Mathematical Classification Numbers: 34E20, Secondary 34M30, 34M60}
	\end{center}
{\bf Summary.}
  	 A singular perturbation problem called WKB equation \par
	  (Eq) \;\; $ h^2u(x,h)-Q(x)u(x,h)=0$ is studied. $h>0$ is a small parameter. \\ 
		Investigation of (Eq) has long history.
		 Recently it has developed by a new method named "Exact WKB Analysis"
		   based on Borel resummation method and new analytic results.  
	 	Here we study (Eq) by another elementary method. 
		We only apply advanced calculus and the theory of 
		  differential equations to (Eq). 
 		We neither assume turning points are simple nor there is 
		no Stokes curve that connects two turning points. 
\section{\normalsize Introduction}
	  In the present paper we treat  
			\begin{equation}
				\begin{aligned}
					h^2y''-Q(x)y=0
				\end{aligned} \label{WKB'}
			\end{equation}
		$h>0$ is a small parameter.
	 	Equations such as \eqref{WKB'}  appeared in the theory of	the quantum mechanic and 
		is named Schr\"{o}dinger equation,
		It is a singular perturbation problem with respect to a small parameter $h$ (\cite{F}).  
		It is well known that there exist formal solutions of \eqref{WKB'}  called WKB 
	 (Wentzel-Kramers-Brillouin) approximation
		represented by $\exp (  \frac{1}{h}\int^x S(x,h) dx)$, 
		$S(x,h)=\sum_{n=0}^{+\infty} s_n (x)h^n$  is a formal series of $h$.   
		One of the method to study  \eqref{WKB'} has been usage of WKB  approximation.   
		In general the series are divergent.  It seems that they were not so often used
		rigorously at the view point of mathematical analysis. \par
		There was a pioneering work by Voros \cite{V},
		 using the application of Borel resummation 
		of formal series and new analytical theories (\cite{Bal} \cite{Cos} and \cite{E}).   
		After \cite{V}  this new kind of WKB analysis has been used and  have developed (\cite{DDP}. \cite{DP} ). 
		 It called  {\it exact WKB analysis} or {\it complex WKB analysis}.   
		Recently it has been made  more progress,  in particular  connecting with
		microlocal analysis. It has been applicable method for singular  perturbed problems 
		and related fields (e.g.\cite{AIT}, \cite{AT}, \cite{I},  \cite{KaKo}, \cite{KT}  
		and \cite{Ta} (see also references in \cite{KT} and \cite{Ta}).   \par
		In this paper  we study  rigorously and simply \eqref{WKB'}, but do not use 
		Borel resummation  method. 
		 We mainly apply advanced calculus and the theory of ordinary differential equations to 
		\eqref{WKB'}. We show the existence of true solution with asymptotic expansion and 
		give what follows from our investigation. 	
  			The contents are following:
	\begin{enumerate}
		\item[\S 1]  Preliminary.  Elementary notions, tuning point, Stokes curve and Stokes 
		domain are defined.  Formal solution $S(x,h)$ is introduced, which satisfies Riccati equation 
		  $hS'+S^2-Q(x)=0$.  We transform it to  an integral equation.  	
		\item[\S 2]   We introduce an auxiliary ordinary differential equation.  
			It is important and useful in the present paper.  
		\item[\S 3]   Construction of a solution of the integral equation.
		\item[\S 4] We construct a solution of $h^2y''-Q(x)y=0$, by using the solution of
			the integral equation.
		\item	[\S 5] Remarks on Stokes domains.  We show that Stokes domains are conformally 
			equivalent to horizontal strips. 
	\end{enumerate}
	\quad	Appendix.  We give the proof of Proposition \ref{prop2.4}.  	
	\section{\normalsize Preliminary } 
			 We assume $Q(x)$ is a polynomial with degree $m$. 
		 	Let $T_p=\{x; Q(x)=0\}$.  
			A point $x \in T_p$ is called a turning point. 
			Let $ {\mathfrak S}_a(x)=\int_a^x\sqrt{Q(\tau)}d\tau$.
			Let $a$ be a turning point. Then  Stokes curve emanating from $a$ is defined by	
		\begin{equation}
			\begin{aligned}
				\{x; {\Im} {\mathfrak S}_a(x)=0\}.	
			\end{aligned}
		\end{equation}
		There are plural Stokes curves emanating from $a$. 	It ends at $\infty$ or another turning point. Stokes curve is 
		a half closed curve that includes emanating point $a$ and excludes the end point.   
	       If Stokes  curve ${\mathcal C}$ emanating from $a$ ends at another  turning point $b$,  then  
		we joint  ${\mathcal C}$   with   Stokes curves emanating from $b$. 
 (See Figure. 1 (c)) \;      
		$\mathbb{C}$  is divided to several region by Stokes curves.  Each of them is called Stokes domain. 
		We may roughly say that it is surrounded by jointed Stokes curves.  The following figure of Stokes curves 
		is quoted from Iwaki \cite{I}.  \vspace{5mm}\par \qquad
\includegraphics [width=10cm, height=4cm] {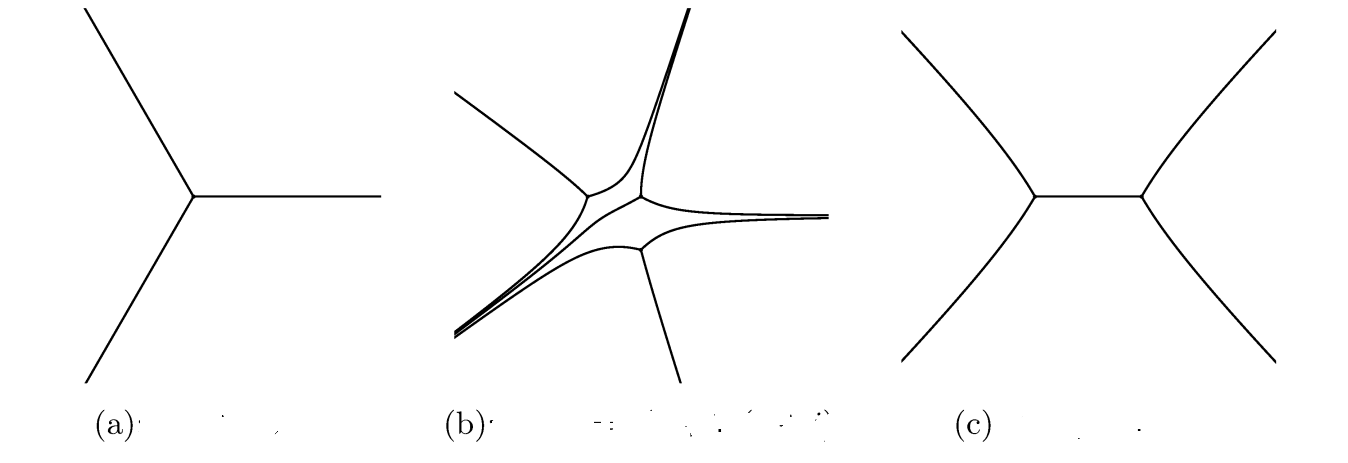} \; Figure.1  \\ 
   (a)\	$Q(x)=x$ \quad (b)\	$Q(x)=x(x+1)(x+i)$ \quad (c)\ $Q(x)=1-x^2$ \vspace{3mm}  \\
	${\mathscr O}(U)$ is the set of holomorphic functions on $U$. 
	 $\partial U$ is the boundary of $U$. 
	\subsection{\normalsize Formal solution}
		We try to find a solution of 
			\begin{equation}
				\begin{aligned}
					h^2y''-Q(x)y=0,
				\end{aligned} \label{WKB}
			\end{equation} in the form $y=\exp \frac{1}{h}\int^x S(\tau)d\tau$. 
		Then we have a Riccati type equation from \eqref{WKB}
		\begin{equation}
			\begin{aligned}
			 hS'+S^2-Q(x)=0.   \label{Ric}
			\end{aligned}
		\end{equation}			
		 There exist 2 formal solutions 
			$\widetilde{S}(x,h)=\sum_{n=0}^{\infty}s_n(x)h^n$ of \eqref{Ric}, 
		\begin{equation*}\left \{
			\begin{aligned}
 				 & s_0(x)=\pm\sqrt{Q(x)}, \;s_1(x)= =-\frac{s_0'(x)}{2s_0(x)}=-\frac{Q'(x)}{4Q(x)}  \\
					 & s_2(x)=-\frac{s_1(x)^2+ s_{1}'(x)}{2s_0(x)}, 
					\; s_1(x)^2+ s_{1}'(x)=\frac{5Q'(x)^2-4Q''(x)Q(x)}{16Q(x)^2} \\
					 & s_n(x)=-\frac{1}{2s_0(x)}\big(\sum_{\begin{subarray}\ i+j=n \\ i,j>0
			\end{subarray}}
					s_i(x)s_j(x)+ s_{n-1}'(x)\big),  \\
			   \end{aligned}  \right.
		\end{equation*}
 				Let  $S(x,h)=s_0(x)+s_1(x)h+W(x,h)h$. Then $W(x,h)$ satisfies
			\begin{equation}
				\begin{aligned}\
				 W'+\big(\frac{2s_0(x)}{h}-\frac{Q'(x)}{2Q(x)}\big)W+ W^2=C(x),
					\ C(x)=-\big(s_1'(x)+s_1(x)^2\big) 
				\end{aligned} \label{W-dif}
			\end{equation}
			$C(x)$ does not depend on the choice of $s_0(x)$.
			We solve \eqref{W-dif} in the following sections,
	\section{\normalsize 	 Integral equation}
		Let $\Omega$ be a Stokes domain.  $a \in \partial \Omega$ is a turning point  and
		${\mathfrak S}_a(x)=\int_{a}^x \sqrt{Q(\tau)}d\tau$. 
			We transform \eqref{W-dif} to an integral equation.  
		Let us introduce integral kernels $K_{\pm}(x,y,h)$, 
			\begin{equation}\left \{
		 		\begin{aligned}	
		       &  K_{+}(x,y,h)=\big(\frac{\sqrt{Q(x)}}{\sqrt{Q(y)}}\big)
		 		  \exp(\frac{ 2({\mathfrak S}_a(y)  -{\mathfrak S}_a(x) )}{h}) 
				 \;\; s_0(x)=\sqrt{Q(x)}, \\
  			&  K_{-}(x,y,h)=\big(\frac{\sqrt{Q(x)}}{\sqrt{Q(y)}}\big)
		 		  	\exp(\frac{ - 2({\mathfrak S}_a(y) -{\mathfrak S}_a(x) )}{h})  
			\;\; s_0(x)=-\sqrt{Q(x)}.
		 		\end{aligned}\right. \label{Ker}
		 	\end{equation}
		We note that if $V_{\pm}(x,h)= \int_{c}^x K_{\pm}(x,y,h)f(y)dy$ is well defined, 
		then  $V_{\pm}(x,h)$  satisfies 
		$V'_{\pm}+\big({\pm}\frac{2\sqrt{Q(x)}}{h}-\frac{Q(x)'}{2Q(x)}\big)V_{\pm}=f(x)$ 
		(see Proposition \ref{prop2.4} and Appendix).
		 Hence  we transform \eqref{W-dif} to an integral equation 
	 		\begin{equation}
			 \begin{aligned}	 		 	
		 		 W_{\pm}(x,h)=-\int_{\gamma_x(\pm)} K_{\pm}(x,y,h)W_{\pm}(y,h)^2dy
			+\int_{\gamma_x(\pm)} K_{\pm}(x,y,h)C(y)dy.
		 		\end{aligned}   \label{w-inteq}
		 	\end{equation}
			$\gamma_x(\pm)$ is an infinite path  from $\infty$ to $x$ in $\Omega$.  It is defined  
			in the next subsection. 
	 \subsection{\normalsize Auxiliary ordinary differential equation} 
			We assume  $	\Im {\mathfrak S}_a(x)>0$ for $x\in \Omega$.  There exist the following cases.
	     \begin{enumerate}
			\item[(1)]   The connected component of $\partial\Omega$ 
			is one curve ${\mathcal C}$ with a turning 
			point  $a \in { \mathcal C}$. ${\Im} {\mathfrak S}_a(x)=0$ for $x\in \partial\Omega$. 
		    	\item[(2)]    The connected component of $\partial\Omega$ consists of two curves 
	    			 $\partial\Omega=\mathcal{C}_1\cup \mathcal{C}_2$,  
			$\mathcal{C}_1\cap \mathcal{C}_2 =\emptyset$.  There are turning points  
			$a \in \mathcal{C}_1 $ and 
				$b \in \mathcal{C}_2$. 
				${\Im} {\mathfrak S}_{a}(x)=0$ for $x\in \mathcal{C}_1$ and
		 		${\Im} {\mathfrak S}_{a}(x)={\Im} {\mathfrak S}_a(b)$ for  $x\in \mathcal{C}_2$..  
			 \end{enumerate}
			There may exist other turning points different from $a, b$ on $\partial \Omega$. 
			 Let  $\delta,\epsilon>0$ and
			\begin{equation} \left \{
				\begin{aligned}
			\mu = & \sup_{x \in \Omega} {\Im}  {\mathfrak S}_a(x),  \; \; 0<\mu \leq +\infty,  \\
			 \Omega_{\delta}=&\{x\in \Omega; \delta <{\Im}  {\mathfrak S}_a(x)<\mu-\delta\}, \\
	   		 D_{\epsilon}=&\{x\in {\mathbb C};   |x-c|>\epsilon \mbox{ for all turning point $c$}\}.
			\end{aligned} \right .
		\end{equation}
		      We have  $\mu =+\infty$ for the case (1) and 
			 $\mu ={\Im}  {\mathfrak S}_a(b)$ for the case (2). 		
			As for constant $\mu$ we refer to Lemmas \ref{lem5.1} and \ref{lem5.2}.
			We remark that there exists $C_{\epsilon}>0$ such that 
		$|Q(x)|^{-1} \leq C_{\epsilon}$ in $D_{\epsilon}$ and for any $\delta>0$ 
		there exists $\epsilon>0$ such that $ \Omega_{\delta}\subset  D_{\epsilon}$. 	\par
			Let us proceed to define integration path $\gamma_x(\pm)$ in \eqref{w-inteq}. 
		 	Consider the following autonomous differential equation,
	 	\begin{equation}
	 		\begin{aligned}\ &
	 			 y'(z)=\frac{1}{\sqrt{Q(y)}}, \;\; y(0)=x  
				 \;\; z\in \mathbb{C}
	 		\end{aligned} \label{path.eq}
	 	\end{equation}
			in order to define  path $\gamma_x(\pm)$. Let $y=\varphi(z,x)$. 	
		 Since equation  \eqref{path.eq} is autonomous,  $\varphi(z_1+z_2,x)=\varphi(z_1,\varphi(z_2,x))$ holds
		and it has an integral ${\mathfrak S}_a(y)-z=C$. Hence  
		\begin{equation}
			\begin{aligned}
		{\mathfrak S}_a(\varphi(z,x))={\mathfrak S}_a(x)+z,
			\end{aligned} \label{2.3}
		\end{equation}	
    	$ \partial_x {\mathfrak S}_a(\varphi(z,x))={\mathfrak S}_a'(x)$  and	
	  \begin{equation}
 	    	\begin{aligned}
  		   	 & \varphi_x(z,x)=\frac{\sqrt{Q(x)}}{\sqrt{Q(\varphi(z,x))}}=\sqrt{Q(x)}\ \varphi_z(z,x).
 	    	\end{aligned} \label{2.4}
  	   \end{equation}
	   Let $z=t+is,  t,s \in {\mathbb R}$. Then
	 	\begin{equation}
	 		\begin{aligned}	 		 
			{\Re} {\mathfrak S}_a(\varphi(z,x))={\Re} {\mathfrak S}_a(x)+ t,\;
			 \Im {\mathfrak S}_a(\varphi(z,x))={\Im} {\mathfrak S}_a(x)+ s.  
	 		\end{aligned}  \label{2.5}
	 	\end{equation}
		We note that if $s=0$, $ {\Im} {\mathfrak S}_a(\varphi(t,x))$ is invariant. 
	\begin{lem}  {\rm (1)}  Let $x\in \Omega$.  Then $\varphi(t,x)$ exists in $-\infty<t<+\infty$. \\
	 {\rm (2)}	Let ${\mathcal C} \subset \partial \Omega$ be a Stokes curve emanating from $a$ and ends at $\infty$.  
	Suppose	$\Re {\mathfrak S}_a(x) >0\  (<0) $ on $\mathcal C-\{a\}$.  
		 Then $\varphi(t,x)$ exists in $ [0,+\infty)$ { $( resp.\ (-\infty, 0] )$} for  $x\in \mathcal C-\{a\}$
		\label{lem2.1}
     \end{lem}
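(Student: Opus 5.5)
We argue by the standard continuation principle for the autonomous holomorphic equation \eqref{path.eq}. The right-hand side $1/\sqrt{Q(y)}$ is holomorphic, with a fixed branch, on any simply connected domain avoiding the turning points, so a solution $\varphi(t,x)$, $t$ real, can be continued as long as it stays in a compact subset of $\Omega$ (or of a neighbourhood of $\mathcal C-\{a\}$, where a branch of $\sqrt{Q}$ is fixed along the curve). If the maximal interval of existence is $(t_-,t_+)$ with $t_+<+\infty$, then as $t\uparrow t_+$ the curve $\varphi(t,x)$ must leave every compact set: either it tends to $\infty$, or it accumulates at a turning point, or it approaches $\partial\Omega$. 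We exclude each of these possibilities using the conservation laws \eqref{2.5}.

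\emph{Step 1 (boundary is not reached).} By \eqref{2.5} with $s=0$, $\Im{\mathfrak S}_a(\varphi(t,x))=\Im{\mathfrak S}_a(x)$ is constant. For $x\in\Omega$ this constant lies in $(0,\mu)$, so $\varphi(t,x)$ stays in the level set $\{\Im{\mathfrak S}_a=\Im{\mathfrak S}_a(x)\}\cap\Omega$. This set is at positive distance from $\partial\Omega$, where $\Im{\mathfrak S}_a$ equals $0$ or $\mu$ by cases (1) and (2). It also stays away from the turning points, since every turning point on $\partial\Omega$ lies on a boundary curve. Hence $\varphi(t,x)\in\Omega_\delta\subset D_\epsilon$ for suitable $\delta,\epsilon$, and $|1/\sqrt{Q}|\le C_\epsilon^{1/2}$ there.

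\emph{Step 2 (no blow-up in finite time).} Since $|\varphi'(t)|\le C_\epsilon^{1/2}$, we get $|\varphi(t,x)-x|\le C_\epsilon^{1/2}|t|$, so $\varphi$ stays bounded on bounded $t$-intervals. Combined with Step 1, the curve stays in a compact subset of $\Omega\cap D_\epsilon$ on $[0,t_+)$. This contradicts maximality, so $t_+=+\infty$, and likewise $t_-=-\infty$, proving (1). I expect the main obstacle to be the rigorous version of Step 1: one must show that the level set $\{\Im{\mathfrak S}_a=c\}$, $0<c<\mu$, does not approach $\partial\Omega$ or a turning point at finite distance. This is where the description of $\partial\Omega$ in cases (1) and (2) enters. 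The argument uses continuity of $\Im{\mathfrak S}_a$ up to $\partial\Omega$ on bounded sets, together with the fact that turning points lie on $\partial\Omega$ where $\Im{\mathfrak S}_a\in\{0,\mu\}$.

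For (2), take $x\in\mathcal C-\{a\}$. On $\mathcal C$ we have $\Im{\mathfrak S}_a=0$, and $\mathcal C$ is a smooth curve away from turning points, parametrized by $\Re{\mathfrak S}_a$. Indeed, along $\mathcal C$, $d{\mathfrak S}_a=\sqrt{Q}\,dy$ is real and nonzero, so $\Re{\mathfrak S}_a$ is strictly monotone on $\mathcal C$, running from $0$ at $a$ to $+\infty$ at $\infty$ in the case $\Re{\mathfrak S}_a>0$. By \eqref{2.5}, the solution $\varphi(t,x)$ remains on $\mathcal C$ (local uniqueness, plus the fact that $\mathcal C$ is itself an integral curve: the point $y\in\mathcal C$ with ${\mathfrak S}_a(y)={\mathfrak S}_a(x)+t$ solves \eqref{path.eq}). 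Moreover $\Re{\mathfrak S}_a(\varphi(t,x))=\Re{\mathfrak S}_a(x)+t$ increases with $t\ge0$, so it stays bounded away from $0$ and the curve never returns to $a$. Thus $\varphi(t,x)$ is simply the point of $\mathcal C$ with ${\mathfrak S}_a$-value ${\mathfrak S}_a(x)+t$, which exists for every $t\ge0$, since ${\mathfrak S}_a$ maps $\mathcal C-\{a\}$ onto $(0,+\infty)$. If another turning point $b$ were met at finite $\Re{\mathfrak S}_a$, $\mathcal C$ would end at $b$, contrary to the assumption that it ends at $\infty$. The case $\Re{\mathfrak S}_a<0$ is symmetric, with $t\le0$.
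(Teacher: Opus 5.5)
Your proof is correct. Part (1) is the paper's argument. Conservation of $\Im{\mathfrak S}_a$ along real $t$ keeps the orbit in $\Omega_\delta\subset D_\epsilon$. The bound $|\varphi_t|\le C_\epsilon^{1/2}$ rules out escape to $\infty$ in finite time. Local solvability near a finite limit point then gives continuation: you phrase this through the maximal interval leaving every compact set, the paper by restarting the equation at $\varphi(t_n,x)$ with $t_n\to T$.

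Part (2) uses a different mechanism. The paper notes that $\Im{\mathfrak S}_a(\varphi(t,x))=0$ and $\Re{\mathfrak S}_a(\varphi(t,x))\ge\Re{\mathfrak S}_a(x)>0$, so the orbit stays on ${\mathcal C}-\{a\}$ away from $a$. It then reruns the continuation argument of (1). You instead write the solution down globally as $t\mapsto({\mathfrak S}_a|_{{\mathcal C}-\{a\}})^{-1}({\mathfrak S}_a(x)+t)$ and invoke uniqueness.

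Your version gives existence on all of $[0,+\infty)$ at once, with no a priori bound on $1/\sqrt{Q}$ along the orbit. The price is two facts about the curve:
\begin{enumerate}
\item strict monotonicity of ${\mathfrak S}_a$ on ${\mathcal C}-\{a\}$, which follows from $\sqrt{Q}\,dy$ being real and nonzero there;
\item $\Re{\mathfrak S}_a\to+\infty$ along ${\mathcal C}$. You assert this without comment; it should be justified, e.g.\ from $|{\mathfrak S}_a(y)|$ growing like $|y|^{(m+2)/2}$ as $y\to\infty$.
\end{enumerate}
The paper's version instead needs the pieces $\{y\in{\mathcal C};\, r\le\Re{\mathfrak S}_a(y)\le R\}$, $r>0$, to be compact and free of turning points. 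Your inversion idea is essentially what the paper itself does later in Section 5, when it constructs $\Psi$ as the inverse of ${\mathfrak S}_a$.

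One small correction to your Step 1: a level set $\{\Im{\mathfrak S}_a=c\}$ is not at positive distance from $\partial\Omega$ globally. For $Q(x)=x$ the gap to the boundary ray decays like $|x|^{-1/2}$. As you point out yourself, though, only separation on bounded sets and from the finitely many turning points is used, so the argument stands.
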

	 \begin{proof} 	 {\rm (1)}. 	Let $x\in \Omega_{\delta}$.   Then $\varphi(t,x) \in \Omega_{\delta}$.
			Assume that $\varphi(t,x)$ exists $[0,T)$ ($T>0$) and
			there is a sequence $\{ t_n\}_{n=1}^{+\infty} \subset [0,T)$ with 
			$\lim_{n\to +\infty} t_n =T$ 
			such that $\varphi(t,x)$ is analytic at $t=t_n$,  singular at $t=T$ 
			and $\lim_{n\to +\infty}\varphi(t_n,x)=c\in  \bar{ \Omega}_{\delta}\cup\{ \infty\}$.  
			Since  $ \varphi_t(t,x)$ is bounded,  $c\not =\infty$. 
			If $c\not =\infty$, then $c\in  \bar{ \Omega}_{\delta}$. For any small $\epsilon>0$ there exists 
			 $n_0$ such that $| \varphi(t_n,x)-c|<\epsilon$ for  $n\geq n_0$.
			Let us consider 
	\begin{equation}
	 		\begin{aligned}\ &
	 			 y'(t)=\frac{1}{\sqrt{Q(y)}}, \;\; y(t_n)= \varphi(t_n,x).
	 		\end{aligned} \label{eq2.6}
	\end{equation}
		   Then the solution of \eqref{eq2.6} exists beyond $t=T$ for a large $n$. Hence
		we can extend  $\varphi(t,x)$ to $t\geq 0$,
		As for the same holds for $t<0$ and $\varphi(t,x)$ exists in $-\infty <t<+\infty$. \\
	 {\rm (2)}. Let  $x \in {\mathcal C}-\{a\}$.  Then $ {\Im} {\mathfrak S}_a(\varphi(t,x))=0 $.  We  may assume	$
		\Re {\mathfrak S}_a(x) >0$.  Then 
		 ${\Re} {\mathfrak S}_a(\varphi(t,x))={\Re} {\mathfrak S}_a(x)+ t \geq {\Re} {\mathfrak S}_a(x)>0$ 
		for  $t\geq 0$.  Hence  $\varphi(t,x) \subset  {\mathcal C}-\{a\}$ for  $t\geq 0$.  By repeating the method
		as (1),  $\varphi(t,x)$ exists $[0,\infty)$.
	\end{proof}
	\begin{rem} Lemma \ref{lem2.1}-{\rm (2)} is used in order that we get Theorem \ref{th4.4}.       	
	\end{rem}
	\begin{lem}  Let $x\in \Omega$.  Then
			\begin{enumerate}
		    \item[{\rm (1)}]  $\lim_{t\to +\infty} \Re {\mathfrak S}_a(\varphi(t,x))=+\infty$.  
		   \item[{\rm (2)}]  $\lim_{t\to +\infty}\varphi(t,x)=\infty$.  
		  \item[{\rm (3)}]  Let $K \subset \Omega$ be compact.  Then
			 $\lim_{t\to +\infty} \min_{x \in K}|\varphi(t,x)|=+\infty$.  
			\end{enumerate} \label{lem2.3}
	\end{lem}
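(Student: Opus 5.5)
Part (1) follows from Lemma \ref{lem2.1}-(1) and the identity \eqref{2.5}. For $x\in\Omega$ the flow $\varphi(t,x)$ exists for all real $t$, and with $s=0$ we have
\begin{equation*}
\Re {\mathfrak S}_a(\varphi(t,x))=\Re {\mathfrak S}_a(x)+t ,
\end{equation*}
which tends to $+\infty$ as $t\to+\infty$. One point needs care: ${\mathfrak S}_a$ must be used consistently along the trajectory. The trajectory stays in $\Omega_\delta$, which is contained in the simply connected Stokes domain $\Omega$, so the branch of $\sqrt{Q}$ fixed on $\Omega$ makes ${\mathfrak S}_a$ a single-valued holomorphic function there. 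The relation \eqref{2.3} then holds along the whole real $t$-axis by analytic continuation from $t=0$.

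For (2), I argue by contradiction. Suppose $\varphi(t_n,x)$ stays bounded along some sequence $t_n\to+\infty$. Pass to a subsequence with $\varphi(t_n,x)\to c\in\bar\Omega_\delta$; here $\delta>0$ is chosen with $x\in\Omega_\delta$, and the trajectory remains in $\Omega_\delta$ because $\Im{\mathfrak S}_a$ is invariant. Since $\bar\Omega_\delta\subset D_\epsilon$ for a suitable $\epsilon$, the function ${\mathfrak S}_a$ is holomorphic and bounded on a neighbourhood of the bounded set $\{|y|\le R\}\cap\bar\Omega_\delta$. In particular it is continuous at $c$. Hence $\Re{\mathfrak S}_a(\varphi(t_n,x))\to\Re{\mathfrak S}_a(c)$ is finite, contradicting (1). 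Therefore $|\varphi(t,x)|\to\infty$. The only subtle point is that ${\mathfrak S}_a$ is bounded on bounded subsets of $\bar\Omega_\delta$, i.e.\ it does not blow up at finite points. This holds because $Q$ is a polynomial and ${\mathfrak S}_a$ is a primitive of $\sqrt{Q}$, which extends continuously up to the boundary, including the turning points.

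For (3), the goal is a uniform version of (2). Since $K$ is compact, there are $\delta>0$ with $K\subset\Omega_\delta$ and a constant $M$ with $|{\mathfrak S}_a(x)|\le M$ on $K$. Fix $R>0$. The function ${\mathfrak S}_a$ is continuous on the compact set $\bar\Omega_\delta\cap\{|y|\le R\}$, so it is bounded there by some $M_R$. If $|\varphi(t,x)|\le R$ for some $x\in K$, then $\varphi(t,x)$ lies in this set and
\begin{equation*}
t\le \Re{\mathfrak S}_a(\varphi(t,x))-\Re{\mathfrak S}_a(x)\le M_R+M .
\end{equation*}
Hence for $t>M_R+M$ we get $\min_{x\in K}|\varphi(t,x)|>R$, which proves (3); the minimum is attained by continuity of $\varphi$ in $x$. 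I expect the main obstacle to be justifying that ${\mathfrak S}_a$ is single-valued and continuous on $\bar\Omega_\delta\cap\{|y|\le R\}$. This relies on $\Omega$ being simply connected and on the local integrability of $\sqrt{Q}$ near turning points. Once that is settled, (2) follows directly as a special case of this uniform estimate.
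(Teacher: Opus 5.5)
Your proposal is correct and follows the paper's proof. Part (1) comes directly from $\Re\mathfrak{S}_a(\varphi(t,x))=\Re\mathfrak{S}_a(x)+t$, and parts (2)--(3) from the fact that this linear growth cannot coexist with $\mathfrak{S}_a$ staying finite at a finite limit point in $\overline{\Omega}_\delta$. The only difference is cosmetic: the paper proves (3) by contradiction with sequences $t_k\to+\infty$, $x_k\to x_*$, while your bound $t\le M_R+M$ is the direct quantitative form of the same argument, and your remarks on the single-valuedness and continuity of $\mathfrak{S}_a$ make explicit what the paper uses tacitly.
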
 
		\begin{proof}  
		 We note \eqref{2.5}.  
		\begin{enumerate}
		   \item[{\rm (1)}]  This follows from \eqref{2.5}.  
		  \item[{\rm (2)}]  Let $x\in \Omega_{\delta}$.
		 Assume  $\lim_{t\to +\infty}\varphi(t,x) \not=\infty$.
			Then there exist
			$t_1<t_2< \cdots \to +\infty$ and $c\not=\infty$ such that 
			 $\lim_{k\to +\infty}\varphi(t_k,x)=c\in  \overline{\Omega}_{\delta}$.  We have a contradiction 
		$$ {\mathfrak S}_a(c)=\lim_{k \to +\infty} {\mathfrak S}_a(\varphi(t_k,x))
			=\lim_{k \to +\infty}( {\mathfrak S}_a(x)+t_k)=\infty.$$ 
		  \item[{\rm (3)}]  Assume 	 $\lim_{t\to +\infty} \min_{x \in K}|\varphi(t,x)| \not=+\infty$.  
			Then there exist
			$t_1<t_2< \cdots \to +\infty$, a sequence $\{x_k \in K; k=1,2, \cdots\}$ such that
			$\lim_{k\to \infty}x_k=x_* \in K$,  $\varphi(t_k,x_k)=c_k$ with
		$\min_{x \in K}|\varphi(t_k,x)|= |\varphi(t_k,x_k)|=|c_k|$ and 
				 $\lim_{k\to +\infty}c_k=c\not=\infty $ We have a contradiction  from \eqref{2.3}
		$$ {\mathfrak S}_a(c)=\lim_{k \to \infty} {\mathfrak S}_a(\varphi(t_k,x_k)
		)=\lim_{k \to \infty}( {\mathfrak S}_a(x_k)+t_k)={\mathfrak S}_a(x_*)+\infty.$$ 
	\end{enumerate} 
	\end{proof} 
	Similar results hold for $t \to -\infty$.  
 	  Let $\Gamma^* =\{ y=\varphi(t,x); -\infty<t \leq 0 \}$ and $\Gamma_*=\{ y=\varphi(t,x); 0\leq t<+\infty \}$.
	We define $\int_{\gamma_x(\pm)}F(y)dy$.  It is integration from $\infty$ to $x$ on $\gamma_x(+)=\Gamma^*$
	 or $\gamma_x(-)=\Gamma_*$ 	and defined as follows
		\begin{equation}
			\begin{aligned} &
		\int_{\Gamma^*} F(y)dy=: \int_{-\infty}^0 F(\varphi(t,x))\varphi_t(t,x)dt=\int_{-\infty}^0 
		\frac{F(\varphi(t,x))}{\sqrt{Q(\varphi(t,x))}}dt, \\
		&
		\int_{\Gamma_*} F(y)dy=: \int_{+\infty}^0 F(\varphi(t,x))\varphi_t(t,x)dt=\int_{+\infty}^0 
		\frac{F(\varphi(t,x))}{\sqrt{Q(\varphi(t,x))}}dt.
			\end{aligned}
		\end{equation}	
		Let $s_0(x)=\sqrt{Q(x)}$ and $\gamma_{x}(+)={\Gamma^*}$. 	
		Then we have 
	     \begin{equation*}
	     	\begin{aligned}
		 (K_{+}f)(x,h) &=\int_{\Gamma^*} K_{+}(x,y,h)f(y)dy    \\
			=\sqrt{Q(x)} & \int_{\Gamma^*}\exp(\frac{2{\mathfrak S}_a(y)-{2\mathfrak S}_a(x)}{h})\frac{f(y)}{\sqrt{Q(y)}}dy \\
			=\sqrt{Q(x)} & \int_{-\infty}^0\exp(\frac{2{\mathfrak S}_a(\varphi(t,x)-{2\mathfrak S}_a(x))}{h})
			\frac{f(\varphi(t,x))}{Q(\varphi(t,x))}dt
	      	\end{aligned}
	\end{equation*}
 	 It follows from  ${\mathfrak S}_a(\varphi(t,x))={\mathfrak S}_a(x)+t$ that  we have
	 \begin{equation}
	     	\begin{aligned}\ 
			 (K_{+}f)(x,h)  
			={\sqrt{Q(x)}}\int_{-\infty}^{0}\exp(\frac{2t}{h})\frac{f(\varphi(x,t))}{Q(\varphi(t,x))} dt,
	      	\end{aligned} \label{Kf}
	\end{equation}  
		by changing $y=\varphi(t,x)$.  
		 We also have for $s_0(x)=-\sqrt{Q(x)}$, by taking $\gamma_x(-)=\Gamma_*$, 
	     \begin{equation}
	     	\begin{aligned}
		 (K_{-}f)(x,h) &=\int_{\Gamma_*} K_{-}(x,y,h)f(y)dy    \\
			=\sqrt{Q(x)}&  \int_{\Gamma_*}\exp(\frac{-{2\mathfrak S}_a(y)+2{\mathfrak S}_a(x)}{h})\frac{f(y)}{\sqrt{Q(y)}}dy. \\
          		= {\sqrt{Q(x)}} &\int_{\infty}^{0}\exp(-\frac{2t}{h})\frac{f(\varphi(x,t))}{Q(\varphi(t,x))} dt.
	      	\end{aligned}
	\end{equation}
		We study $K_{+}$  in the following. Similar results hold for  $K_{-}$.  
		Let
	\begin{equation*}
			\begin{aligned} \ &
		 \Theta_f (x)=\sup_{ t\leq 0} \big |\frac{f(\varphi(x,t))}{Q(\varphi(t,x))} \big|  \\
		&	g(x,h)=\int_{-\infty}^{0}\exp(\frac{2t}{h})\frac{f(\varphi(x,t))}{Q(\varphi(t,x))}dt 
			\end{aligned} 
		\end{equation*}		 
	Then	 we have 		
     \begin{lem}  
 	     	 $(K_{+}f)(x,h)=\sqrt{Q(x)}g(x,h)$ and $|g(x,h)|\leq \dfrac{h\Theta_f(x)}{2}$.
 	     \end{lem}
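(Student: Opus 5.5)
The identity $(K_{+}f)(x,h)=\sqrt{Q(x)}\,g(x,h)$ comes from the change of variables already carried out in \eqref{Kf}. The estimate on $g$ then follows from an elementary bound on the exponential weight. I would first confirm that $\varphi(t,x)$ exists for all $t\le 0$ when $x\in\Omega$. This is Lemma \ref{lem2.1}-(1). It makes $\Gamma^*=\{\varphi(t,x);\,t\le 0\}$ a well-defined path in $\Omega$, running from $\infty$ to $x$ by the $t\to-\infty$ analogue of Lemma \ref{lem2.3}.

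On $\Gamma^*$ I would write $y=\varphi(t,x)$, so that $dy=\varphi_t(t,x)\,dt=dt/\sqrt{Q(\varphi(t,x))}$. Substituting into $K_{+}(x,y,h)f(y)$ gives the factor $\sqrt{Q(x)}/Q(\varphi(t,x))$. By \eqref{2.3}, ${\mathfrak S}_a(\varphi(t,x))-{\mathfrak S}_a(x)=t$, so the exponential becomes $e^{2t/h}$. Putting these together gives exactly $\sqrt{Q(x)}\,g(x,h)$, with $g$ as defined above. This argument treats the integral as well defined.

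For the bound, assume $\Theta_f(x)<\infty$; otherwise there is nothing to prove. Then the integrand of $g$ is bounded in modulus by $e^{2t/h}\Theta_f(x)$. Since $t$ is real and $h>0$, we have
\[
|g(x,h)|\le \Theta_f(x)\int_{-\infty}^0 e^{2t/h}\,dt=\frac{h\,\Theta_f(x)}{2}.
\]
The same domination shows that the integral defining $g$ converges absolutely. This justifies the improper integral along the infinite path.

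The only point needing care is that $t$ stays real along $\Gamma^*$. Because of this, $e^{2t/h}$ is a positive, integrable weight, and no cancellation or contour deformation is needed. I would also note that $\varphi(t,x)$ stays in $\Omega_\delta$ when $x\in\Omega_\delta$, since $\Im{\mathfrak S}_a$ is invariant along the flow by \eqref{2.5}. Hence $Q(\varphi(t,x))$ is bounded away from zero there, and $\Theta_f$ is finite for bounded $f$.
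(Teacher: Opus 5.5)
Your proposal is correct and follows the paper's own argument. The identity is just the change of variables $y=\varphi(t,x)$ combined with ${\mathfrak S}_a(\varphi(t,x))={\mathfrak S}_a(x)+t$, as already carried out in \eqref{Kf}, and the bound comes from dominating the integrand by $e^{2t/h}\Theta_f(x)$ and integrating over $(-\infty,0]$. Your added remarks on existence of the flow for $t\le 0$ and on absolute convergence are sound and simply make explicit what the paper leaves implicit.
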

	\begin{proof} We have 
	 \begin{equation*}
			\begin{aligned}
			|g(x,h)|\leq \int_{-\infty}^{0}\exp(\frac{2t}{h})\Theta_f(x) dt \leq \frac{h\Theta_f(x)}{2}.
			\end{aligned}
		\end{equation*}			
	\end{proof} 
	The following holds.  The proof is given in Appendix.
	  \begin{prop}  Let  $f(x)\in {\mathscr O}(\Omega)$.  Suppose that for any compact set $L\subset \Omega$ 
		there exists $M_L>0$ such that   
		     \begin{equation}
		  \sup_{x\in L, t\leq 0}\big|\frac{f(\varphi(x,t))}{Q(\varphi(t,x))}\big|\leq M_L. \label{f.est}
		     \end{equation} Then  $(K_{+}f)(x,h)\in {\mathscr O}(\Omega)$ and 
  	   	\begin{equation}
  	   		\begin{aligned}
  	   		\frac{d}{dx}(K_{+}f)(x,h)+\big(\frac{2\sqrt{Q(x)}}{h}-\frac{Q'(x)}{2Q(x)}\big)(K_{+}f)(x,h)=f(x).
     			\end{aligned}
     		\end{equation}  \label{prop2.4}
     \end{prop}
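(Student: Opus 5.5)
We work with the representation $(K_{+}f)(x,h)=\sqrt{Q(x)}\,g(x,h)$ from \eqref{Kf}, where
\begin{equation*}
g(x,h)=\int_{-\infty}^{0}e^{2t/h}\,F(\varphi(t,x))\,dt,\qquad F(y)=\frac{f(y)}{Q(y)} .
\end{equation*}
The plan has two parts. First we show that $g$ is holomorphic in $x\in\Omega$. Then we differentiate under the integral sign and use the flow identity \eqref{2.4} to turn the $x$-derivative into a $t$-derivative, after which an integration by parts yields the equation.

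\emph{Holomorphy.} By the flow property $\varphi(z_1+z_2,x)=\varphi(z_1,\varphi(z_2,x))$ and Lemma \ref{lem2.1}(1), each map $x\mapsto\varphi(t,x)$ with $t\le0$ is a holomorphic map of $\Omega$ into $\Omega$: it is the solution of an analytic ODE with holomorphic dependence on the initial data, and by \eqref{2.5} it preserves $\Im{\mathfrak S}_a$. Hence the truncated integrals $g_T(x)=\int_{-T}^0 e^{2t/h}F(\varphi(t,x))\,dt$ are holomorphic in $\Omega$. On a compact $L\subset\Omega$, hypothesis \eqref{f.est} gives
\begin{equation*}
|g-g_T|\le M_L\,\frac{h}{2}\,e^{-2T/h},
\end{equation*}
so $g_T\to g$ uniformly on $L$ and $g\in{\mathscr O}(\Omega)$ by Weierstrass's theorem. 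The same argument applied on a slightly larger compact neighbourhood justifies differentiating under the integral sign, since derivatives of uniformly convergent holomorphic functions converge locally uniformly by Cauchy's estimates.

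\emph{Differentiation.} By \eqref{2.4}, $\partial_x\varphi(t,x)=\sqrt{Q(x)}\,\partial_t\varphi(t,x)$, so $\partial_x[F(\varphi(t,x))]=\sqrt{Q(x)}\,\partial_t[F(\varphi(t,x))]$. Therefore
\begin{equation*}
g'(x)=\sqrt{Q(x)}\int_{-\infty}^0 e^{2t/h}\,\partial_t F(\varphi(t,x))\,dt
=\sqrt{Q(x)}\Big(F(x)-\frac{2}{h}\,g(x)\Big),
\end{equation*}
where the second equality is an integration by parts. The boundary term at $-\infty$ vanishes because $F(\varphi(t,x))$ is bounded by \eqref{f.est} with $L=\{x\}$. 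Writing $V=\sqrt{Q}\,g$, we get
\begin{equation*}
V'=\frac{Q'}{2Q}\,V+Q\Big(F-\frac{2}{h}\,g\Big)=\frac{Q'}{2Q}\,V+f-\frac{2\sqrt{Q}}{h}\,V,
\end{equation*}
which is exactly the claimed equation. The branch of $\sqrt{Q}$ is fixed on the simply connected domain $\Omega$, which contains no turning points.

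\emph{Main obstacle.} The delicate point is that the dependence of $\varphi(t,x)$ on $x$ is holomorphic uniformly for $t\in(-\infty,0]$, which is needed to interchange the limit $T\to\infty$ with $\partial_x$. Holomorphy of each truncation $g_T$ together with locally uniform convergence, both coming from \eqref{f.est}, avoids the need for any direct bound on $\partial_x\varphi$. Justifying holomorphy of $x\mapsto\varphi(t,x)$ for each fixed $t$ uses standard analytic dependence on initial data along the compact path $\{\varphi(s,x);\, t\le s\le0\}$, which stays in some $\Omega_\delta\subset D_\epsilon$, where $Q^{-1/2}$ is holomorphic and bounded.
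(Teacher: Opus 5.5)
Your proposal is correct and follows essentially the same route as the paper's appendix proof: differentiate under the integral sign, convert $\partial_x$ into $\partial_t$ via $\varphi_x=\sqrt{Q(x)}\,\varphi_t$, and integrate by parts in $t$, where the $t=0$ endpoint gives $f(x)$ and the derivative of $e^{2t/h}$ gives $-\frac{2\sqrt{Q(x)}}{h}K_+f$. The differences are minor. You apply the chain rule to $F=f/Q$ as a whole, whereas the paper splits the derivative into its terms $I$ and $II$ and checks that they cancel. You also supply the justification of holomorphy and of differentiating under the integral (truncation to $[-T,0]$ plus Weierstrass), which the paper leaves implicit.
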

 \section{\normalsize Construction of a solution to integral equation \eqref{w-inteq}}  
   	    Let $\Omega$ be a Stokes domain and we assume $0<\Im { \mathfrak S}_a(x)<\mu$ in $\Omega$ and
		 $s_0(x)=\sqrt{Q(x))}$. 
	   $K_{+}f$ is the integral operator defined by 
 	   \begin{equation}
		 	\begin{aligned}\	 		 	
	 		& (K_{+}f)(x,h)=\sqrt{Q(x)}\int_{-\infty}^{0} \exp(\frac{2t}{h})
			\frac{f(\varphi(t,x))}{Q(\varphi(t,x))}dt.
		 	\end{aligned}
  	   \end{equation}
    	 Let  us return to \eqref{w-inteq} and we proceed to construct a solution of integral equation 
		 \begin{equation}\left \{
			\begin{aligned}\	 		 	
			 		& W(x,h)=-(K_{+}W^2)(x,h)+(K_{+}C)(x,h) \\ & C(x)=-(s_1'(x)+s_1(x)^2).
			\end{aligned} \right. \label{3.2}
		\end{equation} 
	 	We introduce an auxiliary parameter $\varepsilon$ to clarify construction process
	     \begin{equation}
		     	\begin{aligned}
			     	W(x,h)=-(K_{+}W^2)(x,h)+\varepsilon(K_{+}C)(x,h).
		     	\end{aligned}  \label{3.3}
	     \end{equation}
		We will take $\varepsilon=1$ and construct a solution of \eqref{3.2}. 
	      Let $W(x,h,\varepsilon)=\sum_{n=1}^{\infty}W_n(x,h)\varepsilon^n$. Then 
      \begin{equation}
      	 	\begin{aligned}\ W_{1}= K_{+}C,	\;\; 	 
			W_{n}=-\sum_{i+j =n}K_{+}W_iW_j \;\; n\geq 2.	 	
		\end{aligned}  \label{3.4}
	\end{equation}
 		Let   
		     \begin{equation}
		     \Theta(x)=\sup_{ t\leq 0 }\big |\dfrac{C(\varphi(t,x))}{Q(\varphi(t,x))}\big |. \label{Theta}
	     \end{equation}
		It is used  to estimate $W_n(x,h)$. We have
	\begin{lem}
		\begin{enumerate}
				\item[{\rm (1)}]  $\dfrac{C(x)}{Q(x)}$ is a rational function.
				\item[{\rm (2)}]	Let $\epsilon>0$ and $x\in D_{\epsilon}$.  Then there is a constant 
			$C_\epsilon>0$  such that
					\begin{equation}
			\begin{aligned}
			\big	|\frac{C(x)}{Q(x)}\big|\leq \frac{C_\epsilon}{(1+|x|)^{m+2}}.
			\end{aligned}
		      \end{equation}
				\item[{\rm (3)}]  Let $\delta>0$ and $x\in \Omega_{\delta}$.  Then there is a constant 
				$C'_\delta>0$  such that
			\begin{equation}
				\big	|\frac{C(\varphi(t,x)}{Q(\varphi(t,x))}\big|\leq C'_\delta. \label{3.7}
			\end{equation}
		\end{enumerate}	\label{lem3.1}
	\end{lem}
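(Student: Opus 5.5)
For (1) we compute $C(x)$ explicitly. From Section 1 we have $s_1=-\frac{Q'}{4Q}$ and
\[
s_1'+s_1^2=\frac{5Q'^2-4Q''Q}{16Q^2}.
\]
Hence
\[
C(x)=-\frac{5Q'(x)^2-4Q''(x)Q(x)}{16Q(x)^2},
\qquad
\frac{C(x)}{Q(x)}=\frac{4Q''(x)Q(x)-5Q'(x)^2}{16Q(x)^3}.
\]
This is a quotient of polynomials, so it is a rational function. Its only possible poles are the turning points, i.e.\ the zeros of $Q$. This settles (1).

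For (2) we compare degrees. Write $Q(x)=q_mx^m+\cdots$ with $q_m\neq0$ and $m\ge1$. If $m=0$, $Q$ is constant and $C\equiv0$, so there is nothing to prove. For $m\ge1$:
\begin{enumerate}
\item[(i)] The numerator $4Q''Q-5Q'^2$ has degree at most $2m-2$.
\item[(ii)] The denominator $16Q^3$ has degree exactly $3m$, with $|Q(x)|\ge \frac{|q_m|}{2}|x|^m$ for $|x|\ge R$ and $R$ large.
\end{enumerate}
So for $|x|\ge R$ we get $|C/Q|\le C'|x|^{2m-2-3m}=C'|x|^{-m-2}$, and this is comparable to $(1+|x|)^{-m-2}$. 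On the bounded set $D_\epsilon\cap\{|x|\le R\}$ we use the remark before Lemma \ref{lem2.1}: $|Q|^{-1}\le C_\epsilon$ on $D_\epsilon$. The numerator is bounded there, so $|C/Q|$ is bounded, while $(1+|x|)^{m+2}\le(1+R)^{m+2}$. Taking the maximum of the two constants gives (2).

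For (3) the key point is that the trajectory stays in $\Omega_\delta$. Let $x\in\Omega_\delta$. By \eqref{2.5} with $s=0$, $\Im\mathfrak S_a(\varphi(t,x))=\Im\mathfrak S_a(x)$ for all real $t$. By Lemma \ref{lem2.1}(1), $\varphi(t,x)$ exists for all real $t$.

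We need the trajectory to stay inside $\Omega$ and not just on a level set of $\Im\mathfrak S_a$. The curve $t\mapsto\varphi(t,x)$ is connected and starts in $\Omega$. It cannot meet $\partial\Omega$, because $\Im\mathfrak S_a$ equals $0$ or $\mu$ there, while it stays equal to $\Im\mathfrak S_a(x)\in(\delta,\mu-\delta)$ along the curve. So $\varphi(t,x)\in\Omega_\delta$ for all $t$.

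By the remark preceding Lemma \ref{lem2.1}, there is $\epsilon>0$ with $\Omega_\delta\subset D_\epsilon$. Applying (2) at the point $\varphi(t,x)$ gives $|C/Q|\le C_\epsilon$, so we may take $C'_\delta=C_\epsilon$.

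The only step that is not a direct computation is this invariance of $\Omega_\delta$ along the flow. It is handled by the connectedness argument above together with the level-set identity \eqref{2.5}.
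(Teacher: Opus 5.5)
Your proof is correct and follows the same route as the paper: $C/Q$ is rational, it decays like $C/Q=O(|x|^{-m-2})$, and $|Q|$ is bounded below on $D_\epsilon$; then $\Omega_\delta$ is invariant under the real flow and $\Omega_\delta\subset D_\epsilon$. You also supply details the paper leaves implicit: the explicit formula for $C/Q$, the degree count $2m-2$ versus $3m$, and the connectedness argument showing that $\varphi(t,x)$ stays in $\Omega_\delta$, which the paper only asserts.
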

		\begin{proof}  Obviously  $\frac{C(x)}{Q(x)}$ is a rational function.  We have
		  $\frac{C(x)}{Q(x)}=O(x^{-m-2})\;  x\to \infty $, and there is a constant $c_\epsilon>0$ such that 
		$|Q(x)|\geq c_\epsilon $ for $x\in D_{\epsilon}$,  hence (2) holds.  Since  $\varphi(t,x) \subset  \Omega_{\delta}$
		for  $x\in \Omega_{\delta}$,   (3) follows from (2).
		\end{proof}
   	\begin{prop} 
	 		There exist $g_n(x,h)\in \mathscr{O}(\Omega)$ $(n\geq 1)$ and positive constants $M_n$ such that 
			$ W_{n}(x,h)=\sqrt{Q(x)}g_n(x,h)$,
	 	\begin{equation}
	 		\begin{aligned}
			 	|g_n(x,h)|\leq M_n (\frac{h}{2})^{2n-1} \Theta(x)^{n}\quad x\in \Omega
	 		\end{aligned} \label{Wn-est}
	 	\end{equation}
		 	and $\sum_{n=1}^{\infty}M_n{\tau}^n$ converges in a neighborhood of $\tau=0$.
		\label{prop3.2}
 	\end{prop}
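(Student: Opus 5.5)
The plan is to argue by induction on $n$ using the recursion \eqref{3.4} together with the basic estimate of the lemma preceding Proposition \ref{prop2.4}, namely $(K_{+}f)(x,h)=\sqrt{Q(x)}g(x,h)$ with $|g(x,h)|\le \frac{h}{2}\Theta_f(x)$. The key structural observation is the flow property $\varphi(t_1,\varphi(t_2,x))=\varphi(t_1+t_2,x)$. It implies that $\Theta$ is monotone along trajectories: for $t\le 0$ one has $\Theta(\varphi(t,x))\le\Theta(x)$, because the supremum defining $\Theta(\varphi(t,x))$ runs over a subset of the backward orbit of $x$. This is what allows the bounds to propagate through the nonlinearity $W^2$.

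For $n=1$, $W_1=K_{+}C$, so $g_1=g$ with $f=C$ and $|g_1|\le \frac{h}{2}\Theta(x)$; we take $M_1=1$. Holomorphy of $g_1$ in $\Omega$ follows from Proposition \ref{prop2.4}, whose hypothesis \eqref{f.est} holds on compacts by Lemma \ref{lem3.1}(3). For $n\ge2$, assume the claim holds for all indices below $n$. Then
\[
W_iW_j=Q\,g_ig_j,\qquad \frac{W_i(y)W_j(y)}{Q(y)}=g_i(y)g_j(y).
\]
Hence
\[
W_n(x,h)=-\sqrt{Q(x)}\int_{-\infty}^0 e^{2t/h}\sum_{i+j=n}g_i(\varphi(t,x),h)\,g_j(\varphi(t,x),h)\,dt .
\]
The factor $1/Q$ is absorbed exactly, which is why the normalization $W_n=\sqrt{Q}\,g_n$ is natural. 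Using the induction hypothesis and the monotonicity of $\Theta$ along the orbit,
\[
|g_ig_j(\varphi(t,x))|\le M_iM_j\Big(\frac h2\Big)^{2n-2}\Theta(x)^n .
\]
Integrating $e^{2t/h}$ over $(-\infty,0]$ gives a further factor $h/2$, so
\[
|g_n|\le \Big(\sum_{i+j=n}M_iM_j\Big)\Big(\frac h2\Big)^{2n-1}\Theta(x)^n .
\]
We therefore define $M_n=\sum_{i+j=n,\,i,j\ge1}M_iM_j$.

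Holomorphy of $g_n$, i.e. of $W_n$, again follows from Proposition \ref{prop2.4} applied to $f=W_iW_j$. Its hypothesis \eqref{f.est} requires $\sup_{x\in L,t\le0}|g_ig_j(\varphi(t,x))|<\infty$ for compact $L$. This follows from the bound just proved once $\Theta$ is bounded on $L$. For that, cover $L\subset\Omega_\delta$ for some $\delta$ (since $\Im\mathfrak S_a$ is continuous and $L$ is compact, it stays away from $0$ and $\mu$), and use that the orbit stays in $\Omega_\delta$ together with Lemma \ref{lem3.1}(3). If $\Theta(x)=\infty$ somewhere the estimate is vacuous, but on $\Omega$ it is finite by this same argument.

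Finally, $M(\tau)=\sum M_n\tau^n$ formally satisfies $M=\tau+M^2$. Hence
\[
M(\tau)=\frac{1-\sqrt{1-4\tau}}{2},
\]
so the $M_n$ are the Catalan numbers, which are $\le 4^n$, and the series converges for $|\tau|<1/4$. The main point needing care is the monotonicity of $\Theta$ along backward orbits combined with the uniform boundedness on compacts that Proposition \ref{prop2.4} needs. The rest is bookkeeping.
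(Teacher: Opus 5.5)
Your proposal is correct and follows essentially the same route as the paper. Both arguments use induction on $n$ via \eqref{3.4}, the normalization $W_n=\sqrt{Q}\,g_n$ that cancels the factor $1/Q(\varphi(t,x))$, the monotonicity $\Theta(\varphi(t,x))\le\Theta(x)$ for $t\le 0$ coming from the flow property, and the recursion $M_n=\sum_{i+j=n}M_iM_j$. The only difference is cosmetic: you solve $M=\tau+M^2$ explicitly (Catalan numbers, radius $1/4$), whereas the paper takes the holomorphic solution $\psi$ of $y=y^2+\tau M_1$ with $\psi(0)=0$ and matches coefficients.
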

	 	\begin{proof}
		 	We have 
	 	 \begin{equation}
	 		\begin{aligned}\	 		 	
	 		&  W_{1}(x,h)
	 		=\int_{-\infty}^{0}\frac{\sqrt{Q(x)}}{Q(\varphi(t,x))}\exp(\frac{2t}{h})C(\varphi(t,x))dt \\
	 		& g_1(x,h)=\int_{-\infty}^{0}\exp(\frac{2t}{h})\frac{C(\varphi(t,x))}{Q(\varphi(t,x))}dt,
	 		\end{aligned} \label{3.8}
   		  \end{equation}
		    $ W_{1}(x,h)=\sqrt{Q(x)}g_1(x,h)$ and  $g_1(x,h)\in \mathscr{O}(\Omega)$.   
			$ |g_1(x,h)|\leq \frac{h}{2} \Theta(x)$ and   $M_1=1$. 
		 	Assume the assertion with \eqref{Wn-est} holds for $1\leq n <N$.  Then for $i+j=N$
		\begin{equation*}
	 		\begin{aligned}
	 		 		K_{+}W_iW_j& =\int_{-\infty}^{0}\frac{\sqrt{Q(x)}}{Q(\varphi(t,x))}\exp(\frac{2t}{h})
 		 		Q(\varphi(t,x)) g_i(\varphi(t,x),h)g_j(\varphi(t,x),h)dt \\
	 		 	 &={\sqrt{Q(x)}}\int_{-\infty}^{0}{\exp(\frac{2t}{h})}g_i(\varphi(t,x),h)g_j(\varphi(t,x),h)dt \\
 			g_N(x.h)&=-({\sum_{i+j=N}K_{+}W_iW_j}) /{\sqrt{Q(x)}}			  
	 		\end{aligned} 
	 	\end{equation*}
		\begin{equation}
	 		\begin{aligned}
	 		 	 &=-\sum_{i+j=N}\int_{-\infty}^{0}{\exp(\frac{2t}{h})}g_i(\varphi(t,x),h)g_j(\varphi(t,x),h)dt.  
	 		\end{aligned} \label{3.10}
	 	\end{equation}
	It follows from $ |g_i(x,h)g_j(x,h)|\leq M_iM_j (\frac{h}{2})^{2N-2} \Theta(x)^{N}$ that
	 		 	\begin{equation*}
	 		\begin{aligned}\ &
	 	  |g_i(\varphi(t,x),h)g_j(\varphi(t,x),h)|\leq M_iM_j (\frac{h}{2})^{2N-2}  \Theta(\varphi(t,x))^{N}.
	 		\end{aligned} 
	 	\end{equation*}
		Since for $t\leq 0$
	 	\begin{equation*}
	 		\begin{aligned}
	 		 \Theta(\varphi(t,x))=\sup_{s\leq 0}\big|\dfrac{C(\varphi(s,\varphi(t,x))}
	 		{Q(\varphi(s,\varphi(t,x)))}\big|=\sup_{s\leq 0}\big|\dfrac{C(\varphi(s+t,x))}
	 		{Q(\varphi(s+t,x))}\big|\leq \Theta(x),
	 		\end{aligned}
	 	\end{equation*}
	 		 	\begin{equation}
	 		\begin{aligned}\ &
	 	  |g_i(\varphi(t,x),h)g_j(\varphi(t,x),h)|\leq M_iM_j (\frac{h}{2})^{2N-2}  \Theta(x)^{N}.
	 		\end{aligned} \label{3.11}
	 	\end{equation}
	 	Hence 
		\begin{equation*}
	 		\begin{aligned}
	 		 |g_N(x,h)|
	 		 	 &\leq\sum_{i+j=N}M_iM_j (\frac{h}{2} )^{2N-2}\Theta(x)^N\int_{-\infty}^{0}{\exp(\frac{2t}{h})}dt \\
				&  \leq\sum_{i+j=N}M_iM_j (\frac{h}{2} )^{2N-1}\Theta(x)^N. 
	 		\end{aligned} 
	 	\end{equation*}
 	We define $M_N=\sum_{i+j=N}M_iM_j$. Then \eqref{Wn-est} holds for $n=N$ and
	 $g_N(x,h) \in {\mathscr O}(\Omega)$ from \eqref{3.7}, \eqref{3.10} and \eqref{3.11}.
      Let us show convergence of $\sum_{n=1}^{\infty}M_n{\tau}^n$.  Let 
      $y=\psi(\tau)$ be a holomorphic function satisfing $y=y^2+\tau M_1$ with $\psi(0)=0$. Let
      $\psi(\tau)=\sum_{n=1}^{\infty} c_n\tau^n$. Then we have
      \begin{equation*}
 	     	\begin{aligned}
      		c_1=M_1, \;\; c_{n}=\sum_{i+j =n}c_ic_j \;\; n\geq 2,
	      	\end{aligned}
      \end{equation*}
	      and $c_n=M_n$. This means that 
	      $\sum_{n=1}^{\infty}M_n{\tau}^n$ converges in a neighborhood of $\tau=0$.
 	\end{proof}
	Take $\varepsilon=1$ and $W(x,h)=\sum_{n=1}^{\infty} W_n(x,h)$.
	 \begin{thm} Let $x \in \Omega$. Then there exist $C,M>0$ such that if  $ 0< h \leq C\Theta(x)^{-1/2}$, 
		  $g(x,h)=\sum_{n=0}^{\infty}g_{n}(x,h)$ and $W(x,h)=\sum_{n=0}^{\infty} W_n(x,h)$ converge,   and
		\begin{equation}
		      	\begin{aligned}
	   		W(x,h)=\sqrt{Q(x)}g(x,h),   \quad |g(x,h)|\leq Mh\Theta(x).
		      	\end{aligned} 
		 \end{equation}\label{th3.3}
	  \end{thm}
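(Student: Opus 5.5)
The proof is a direct majorant-series argument built on Proposition \ref{prop3.2}. (The index in the statement should start at $n=1$, since $W_0$ is not defined and the expansion \eqref{3.4} begins with $W_1$.) Let $\rho>0$ be the radius of convergence of $\sum_{n\ge1}M_n\tau^n$. From the recursion $M_1=1$, $M_N=\sum_{i+j=N}M_iM_j$ we can be explicit: $\psi(\tau)=\frac{1-\sqrt{1-4\tau}}{2}$, so $M_n$ are Catalan numbers, $\rho=1/4$, and $M_n\le 4^{n}$. For fixed $x\in\Omega$, the estimate \eqref{Wn-est} gives
\begin{equation*}
|g_n(x,h)|\le M_n\Big(\frac h2\Big)^{2n-1}\Theta(x)^n=\frac{2}{h}\,M_n\Big(\frac{h^2\Theta(x)}{4}\Big)^{n}.
\end{equation*}
So $\sum_n g_n(x,h)$ is dominated by $\frac 2h\,\psi(\tau)$ with $\tau=h^2\Theta(x)/4$.

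First choose $C$ so that $h\le C\Theta(x)^{-1/2}$ forces $\tau\le C^2/4\le\tau_0<\rho$. For instance $C=1$ gives $\tau\le 1/4\cdot 1$, which sits on the boundary, so take $C=1/2$, giving $\tau\le1/16$. On $[0,\tau_0]$ we have $\psi(\tau)\le \tau\,\psi(\tau_0)/\tau_0$, because $\psi(\tau)/\tau=\sum M_n\tau^{n-1}$ is increasing. Hence
\begin{equation*}
\sum_{n\ge1}|g_n(x,h)|\le \frac2h\cdot\frac{\psi(\tau_0)}{\tau_0}\cdot\frac{h^2\Theta(x)}{4}=Mh\Theta(x),\qquad M=\frac{\psi(\tau_0)}{2\tau_0}.
\end{equation*}
This gives absolute convergence of $g(x,h)=\sum g_n$ and the bound $|g|\le Mh\Theta(x)$. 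Since each $W_n=\sqrt{Q(x)}g_n$ with the common factor $\sqrt{Q(x)}$, the series $W=\sum W_n$ converges as well and $W=\sqrt{Q(x)}g$.

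Beyond the pointwise statement, I would also check that the convergence is locally uniform, so that $W$ is holomorphic in $x$ and satisfies \eqref{3.2}. On $\Omega_\delta$, Lemma \ref{lem3.1}(3) bounds $\Theta$ by $C'_\delta$, so for $h\le C (C'_\delta)^{-1/2}$ the series converges uniformly there. One then passes to the limit term by term in $W=-K_+W^2+K_+C$, which is legitimate by dominated convergence in the $t$-integral, using \eqref{3.11} and $\Theta(\varphi(t,x))\le\Theta(x)$.

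The only real obstacle is keeping the constants independent of $x$. This works because the whole $x$-dependence enters through the single quantity $h^2\Theta(x)$, so $C$ and $M$ depend only on the sequence $M_n$.
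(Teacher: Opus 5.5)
Your proof is correct and is essentially the paper's argument. The paper also dominates $\sum_n|g_n(x,h)|$ by $\frac{2}{h}\sum_n M_n\bigl(h^2\Theta(x)/4\bigr)^n$, but it uses the cruder bound $M_n\le\rho^n$ and a geometric series with ratio at most $1/2$ (giving $M=\rho$), where you use the explicit Catalan generating function and the monotonicity of $\psi(\tau)/\tau$. Your remarks that the sums should start at $n=1$, and that convergence is locally uniform with $W$ solving \eqref{3.2}, are correct additions that the paper leaves implicit.
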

	  \begin{proof} 
	 	 Since $\sum_{n=1}^{\infty}M_n{\tau}^n$ converges, there exists $\rho>0$ such that  $M_n\leq \rho^n\; (n\geq 1)$. Then
	 	 \begin{equation}
	 	 	\begin{aligned}
			 |g(x,h)|\leq  \sum_{n=1}^{\infty}|g_n(x,h)|\leq \frac{2}{h}\sum_{n=1}^{\infty}(\frac {h^2\rho}{4})^n\Theta(x)^n
	 	 	\end{aligned}
	 	 \end{equation}
	 	 If $|(\frac {h^2\rho}{4})\Theta(x)|<1/2$, $\sum_{n=1}^{\infty}|g_n(x,h)|$ converges.
	  \end{proof} 
		We have from Lemma \ref{lem3.1}
	 \begin{cor} Let $\delta>0$ and $x \in \Omega_{\delta}$. Then there exist $H(\delta)$,   $M(\delta)>0$ such that 
		if  $ 0< h <H(\delta)$, 
		  $g(x,h)$ 
		is   holomorphic in $\Omega_{\delta}$ and  $|g(x,h)|\leq M(\delta)h.$
	  \end{cor}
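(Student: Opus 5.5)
The plan is to combine Theorem \ref{th3.3} with the uniform bound of Lemma \ref{lem3.1}-(3), and then obtain holomorphy from locally uniform convergence of the series $\sum g_n$. First I make $\Theta$ uniformly bounded on $\Omega_\delta$. For $x\in\Omega_\delta$ the trajectory $\varphi(t,x)$, $t\le 0$, stays in $\Omega_\delta$ by \eqref{2.5}, since $\Im{\mathfrak S}_a$ is invariant along real $t$. Choose $\epsilon>0$ with $\Omega_\delta\subset D_\epsilon$. Then \eqref{3.7} gives
\begin{equation*}
\Theta(x)=\sup_{t\le 0}\Big|\frac{C(\varphi(t,x))}{Q(\varphi(t,x))}\Big|\le C'_\delta \qquad (x\in\Omega_\delta).
\end{equation*}
Take $\rho$ with $M_n\le\rho^n$, as in the proof of Theorem \ref{th3.3}. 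Define $H(\delta)$ by $\frac{H(\delta)^2\rho}{4}C'_\delta=\frac12$, so that $\frac{h^2\rho}{4}\Theta(x)\le\frac12$ for $0<h<H(\delta)$ and $x\in\Omega_\delta$.

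Next I estimate the series. By Proposition \ref{prop3.2},
\begin{equation*}
|g(x,h)|\le\sum_{n\ge1}|g_n(x,h)|\le\frac{2}{h}\sum_{n\ge1}\Big(\frac{h^2\rho C'_\delta}{4}\Big)^n .
\end{equation*}
The ratio of this geometric series is at most $\frac12$, so the sum is bounded by twice its first term. This gives $|g(x,h)|\le \rho C'_\delta h$, and I set $M(\delta)=\rho C'_\delta$. The bound $|g_n(x,h)|\le\frac{2}{h}2^{-n}$ is uniform in $x\in\Omega_\delta$, so the Weierstrass M-test gives uniform convergence of $\sum g_n$ on $\Omega_\delta$.

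Finally, each $g_n(\cdot,h)$ is in $\mathscr O(\Omega)$ by Proposition \ref{prop3.2}, so its restriction to the open set $\Omega_\delta$ is holomorphic. A uniform limit of holomorphic functions is holomorphic, hence $g(\cdot,h)\in\mathscr O(\Omega_\delta)$, and then $W=\sqrt{Q}\,g$ is holomorphic on $\Omega_\delta$ as well.

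The only delicate point is the first step, namely that $\Theta$ is bounded uniformly rather than just pointwise. This requires the invariance of $\Omega_\delta$ under the backward flow together with $\Omega_\delta\subset D_\epsilon$, so that the decay estimate of Lemma \ref{lem3.1}-(2) applies along the entire trajectory. Both facts are already stated in the paper, so I expect no real obstacle there.
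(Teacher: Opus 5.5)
Your proposal is correct and follows the paper's own argument. You bound $\Theta$ uniformly on $\Omega_\delta$ by Lemma \ref{lem3.1}-(3), choose $H(\delta)=(2/(\rho C'_\delta))^{1/2}$ so that $\frac{h^2\rho}{4}\Theta(x)\le\frac12$, and sum the geometric bound from Theorem \ref{th3.3}. You also make explicit two things the paper leaves implicit: the constant $M(\delta)=\rho C'_\delta$, and the Weierstrass M-test argument giving holomorphy of $g(\cdot,h)$ on $\Omega_\delta$.
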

		\begin{proof}
		We have $\sup_{x\in \Omega_{\delta} } \Theta(x)\leq  C_\delta$ (Lemma \ref{lem3.1}).  Then
		\begin{equation*}
			\begin{aligned}
		(\frac {h^2\rho}{4})\Theta(x) \leq (\frac {h^2\rho}{4})C_\delta 
			\end{aligned}
		\end{equation*}			
      		If $	|(\frac {h^2\rho}{4}) C_\delta \leq 1/2$,  the assertion hold and  $H(\delta)= (\frac {2}{\rho C_\delta})^{1/2}$.
		\end{proof}    
	Let us study asymptotic behavior of  $g(x,h)$.  
		Let $\psi(x,h)$ be a holomorphic
		function in $x$ with a parameter $h>0$ and assume the following holds: \par
            { \it For any $\delta>0$ there exists $H(\delta)>0$ such that $\psi(x,h)$ is holomorphic in
		 $x\in \Omega_{\delta}$ for $0<h<H(\delta)$.  \par
		$\psi(x,h) \sim \sum_{n=0}^\infty a_n(x)h^n$,  where $\{a_n(x);n\geq 0\}$ are holomorphic in $\Omega$,
 		means that for any $N\geq 0$ there exists $C_N(\delta)$ such that
		\begin{equation*}
			\begin{aligned}
		      |	\psi(x,h) - \sum_{n=0}^{N} a_n(x)h^n| \leq C_N(\delta) h^{N+1}\quad x\in \Omega_{\delta}.
			\end{aligned}
		\end{equation*}		}	
		If $ a_n(x)=0$ for $0\leq n\leq N$,  then  we denote simply $\psi(x,h)=O(h^{N+1})$.  Hence 
		\begin{equation}
			\begin{aligned}
			\psi(x,h) = \sum_{n=0}^{N} a_n(x)h^n +\psi_N(x,h) , \;  \psi_N(x,h)=O(h^{N+1} ).
			\end{aligned}
		\end{equation}	
		Let 
	\begin{equation}
	 		\begin{aligned} 
			&\widehat{g}_1(t,x,h)=\frac{C(\varphi(t,x))}{Q(\varphi(t,x))},  \\
	 	      &  \widehat{g}_{n}(t, x,h)=-\sum_{i+j=n}g_i(\varphi(t,x),h)g_j(\varphi(t,x),h)\;\; n\geq 2.
	 		\end{aligned}
	 	\end{equation}
        Then
	\begin{equation}
	 		\begin{aligned} 
		g_n(x,h)=\int_{-\infty}^0 \exp(\frac{2t}{h}) \widehat{g}_{n}(t, x,h) dt.
	 		\end{aligned}
	 	\end{equation}
		It follows from Watson's Lemma that
		\begin{equation*}
			\begin{aligned}
			g_1(x,h)\sim \sum_{k=1}^{\infty} a_{1,k}(x)h^k,\quad a_{1,1}=\frac{C(x)}{2Q(x)}.
			\end{aligned}
		\end{equation*}	
		We assume $g_n(x,h)\sim \sum_{k=2n-1}^{\infty} a_{n,k}(x)h^k$ for $1\leq n \leq N-1$. 
		Let    $a_{i,j,m}(x)=\sum_{k+\ell=m} a_{i,k}(x)a_{j,\ell}(x)$ for $i+j=N$. Then 
			\begin{equation*}
			\begin{aligned} &
		 (\sum_{k=2i-1}^{\infty} a_{i,k}(x)h^k)  (\sum_{\ell=2j-1}^{\infty} a_{j,\ell}(x)h^\ell) =
			\sum_{m=2N-2}^{\infty}  a_{i,j,m}(x) h^m
			\end{aligned}
		\end{equation*}	
		as formal series.  Hence 
		\begin{equation*}
			\begin{aligned} &
		   g_{i}(x,h) g_{j}(x,h)=	\sum_{m=2N-2}^{M}  a_{i,j,m}(x) h^m+ a_{i,j}^M(x,h)\\ &   a_{i,j}^M(x,h)=O(h^{M+1})
			\end{aligned}
		\end{equation*}			
		and 
	\begin{equation*}
	 		\begin{aligned} 
		&	g_N(x,h)=	
		 \sum_{i+j=N}   \int_{-\infty}^{0} {\exp(\frac{2t}{h})}g_{i}(\varphi(t,x),h) g_{j}(\varphi(t,x),h)dt \\
		= &	\sum_{m=2N-2}^{M}\Big ( \sum_{i+j=N}\int_{-\infty}^{0}  {\exp(\frac{2t}{h})}a_{i,j,m}(\varphi(t,x))dt \Big ) h^m \\
              &  +  \sum_{i+j=N}\int_{-\infty}^{0}  {\exp(\frac{2t}{h})} a_{i,j}^M(\varphi(t,x),h)dt
			\end{aligned}
	 	\end{equation*}	
		By Watson's lemma there  exist $\{a_{N.k}(x); k\geq 2N-1\}$ such that  
		$g_N(x,h)\sim 	\sum_{k=2N-1}^{\infty} a_{N,k}(x)h^k$.  \par
		Consequently the following holds for $g(x,h)$ in Theorem \ref{th3.3} 
	\begin{thm} $g(x.h)$ has an asymptotic expansion $g(x.h)\sim  \sum_{k=1}^{\infty} a_{k}(x)h^k.$
	\end{thm}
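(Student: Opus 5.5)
The plan is to pass from the termwise expansions $g_n(x,h)\sim\sum_{k\ge 2n-1}a_{n,k}(x)h^k$, established just above by induction with Watson's lemma, to an expansion of the sum $g=\sum_n g_n$. The point is that the $n$-th term starts at order $h^{2n-1}$. So for a fixed truncation order $N$, only the finitely many $g_n$ with $2n-1\le N$ contribute to the coefficients up to $h^N$. We therefore define
\[
a_k(x)=\sum_{n:\,2n-1\le k}a_{n,k}(x),
\]
which is a finite sum of functions holomorphic in $\Omega$. Fix $\delta>0$ and work on $\Omega_\delta$ with $0<h<H(\delta)$, where $H(\delta)$ is as in the Corollary to Theorem \ref{th3.3}.

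Next we split the remainder. Let $n_0$ be such that $2n_0-1>N$. Then
\[
g-\sum_{k=1}^N a_kh^k=\sum_{n<n_0}\Big(g_n-\sum_{k=2n-1}^{N}a_{n,k}h^k\Big)+\sum_{n\ge n_0}g_n .
\]
Each term of the first sum is $O(h^{N+1})$ uniformly on $\Omega_\delta$ by the termwise asymptotics, which are uniform in $\Omega_\delta$ by the definition of $\sim$. There are finitely many such terms, so the constants simply add.

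For the tail we use Proposition \ref{prop3.2} with $M_n\le\rho^n$ and $\Theta(x)\le C'_\delta$ on $\Omega_\delta$ from Lemma \ref{lem3.1}:
\[
\sum_{n\ge n_0}|g_n|\le \frac{2}{h}\sum_{n\ge n_0}\Big(\frac{h^2\rho C'_\delta}{4}\Big)^n\le \frac{4}{h}\Big(\frac{h^2\rho C'_\delta}{4}\Big)^{n_0}=O(h^{2n_0-1})=O(h^{N+1}),
\]
once $h$ is shrunk so that the ratio is at most $1/2$. This gives $g(x,h)\sim\sum_{k\ge1}a_k(x)h^k$.

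The main obstacle is making sure the termwise expansions really hold uniformly on $\Omega_\delta$. Watson's lemma must be applied to $\int_{-\infty}^0 e^{2t/h}F(\varphi(t,x))\,dt$ with $t$-Taylor remainders controlled uniformly in $x\in\Omega_\delta$, and the remainder functions $a^M_{i,j}(\varphi(t,x),h)$ must be $O(h^{M+1})$ uniformly along the whole backward trajectory. I would handle this using the invariance $\varphi(t,x)\in\Omega_\delta$ for $x\in\Omega_\delta$. The $t$-derivatives of $F(\varphi(t,x))$ are polynomial in $\varphi_t=Q(\varphi)^{-1/2}$ and in derivatives of rational functions, all bounded on $D_\epsilon\supset\Omega_\delta$. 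Consequently the Taylor remainder in $t$ at $t=0$ is bounded by $C|t|^{K}$ uniformly, and integrating against $e^{2t/h}$ gives $O(h^{K+1})$. The remainder terms in the inductive step are integrated directly by $\int e^{2t/h}\,dt=h/2$. If this uniformity is granted as part of the preceding inductive argument, the theorem reduces to the finite-plus-tail splitting above.
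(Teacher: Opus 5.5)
Your proposal is correct and follows the paper's route. Your coefficients $a_k(x)=\sum_{n\le[(k+1)/2]}a_{n,k}(x)$ are exactly the paper's regrouping of the termwise expansions $g_n\sim\sum_{k\ge 2n-1}a_{n,k}h^k$. The paper stops at that formal-series identity, while your finite-plus-tail splitting, with the tail bounded by $\frac{4}{h}\big(\frac{h^2\rho C'_\delta}{4}\big)^{n_0}=O(h^{2n_0-1})$ using $M_n\le\rho^n$ and $\Theta\le C'_\delta$ on $\Omega_\delta$, supplies the uniform remainder estimate that the paper leaves implicit.
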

	\begin{proof}  We have $g(x.h)=\sum_{n=1}^{\infty}g_n(x,h)$ and a relation of formal series
		\begin{equation*}
			\begin{aligned}
			\sum_{n=1}^{\infty}(\sum_{k=2n-1}^{\infty} a_{n,k}(x)h^k)
				=\sum_{k=1}^{\infty}\big(\sum_{n=1}^{[(k+1)/2]} a_{n,k}(x)\big)h^k=\sum_{k=1}^{\infty} a_{k}(x)h^k.
			\end{aligned}
		\end{equation*}			
	\end{proof}
	As for $S(x,h)=s_0(x)-\frac{Q'(x)}{4Q(x)}h +hW(x,h), \; W(x,h)=\sqrt{Q(x)}g(x,h)$
	\begin{cor}
		\begin{equation}
			\begin{aligned}
		      {S(x,h)} \sim s_0(x) -\frac{Q'(x)}{4Q(x)}h+\sum_{n=2}^{\infty}s_n(x)h^{n}.
			\end{aligned}
		\end{equation}			
	\end{cor}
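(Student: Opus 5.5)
The plan is to compare two asymptotic series for the same object. By the preceding theorem, $g(x,h)\sim\sum_{k\ge1}a_k(x)h^k$ on every $\Omega_\delta$. Hence $S(x,h)=s_0(x)+s_1(x)h+h\sqrt{Q(x)}g(x,h)$ has the asymptotic expansion
\[
S\sim s_0+s_1h+\sum_{k\ge1}\sqrt{Q}\,a_kh^{k+1}.
\]
It remains to show that $\sqrt{Q(x)}a_{n-1}(x)=s_n(x)$ for $n\ge2$. To do this, I will show that the coefficients of this expansion satisfy the same recursion as the formal solution $\widetilde S$ with $s_0=\sqrt{Q}$, namely
\[
s_n=-\frac{1}{2s_0}\Big(\sum_{i+j=n,\ i,j>0}s_is_j+s_{n-1}'\Big).
\]

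First I check that $S$ actually solves the Riccati equation \eqref{Ric}. By Proposition \ref{prop2.4}, applied termwise to $W_n=K_+(\cdots)$, each $W_n$ satisfies the linear equation with right-hand side $\varepsilon C$ or $-\sum_{i+j=n}W_iW_j$. The hypothesis \eqref{f.est} holds for $f=C$ by Lemma \ref{lem3.1}, and for $f=W_iW_j=Qg_ig_j$ by the bounds \eqref{Wn-est} on compacts of $\Omega_\delta$. Summing with $\varepsilon=1$ and using the uniform convergence from Theorem \ref{th3.3} then shows that $W$ satisfies \eqref{W-dif}. Since the uniform convergence is on open sets, Weierstrass' theorem lets us differentiate termwise. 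Therefore $hS'+S^2-Q=0$ in $\Omega_\delta$ for $0<h<H(\delta)$.

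Next, I need the expansion of $S'$. Since each $a_k$ is holomorphic and the remainder bounds hold uniformly on $\Omega_\delta$, Cauchy's estimate on $\Omega_{2\delta}\subset\Omega_\delta$ gives $S'\sim s_0'+s_1'h+\sum(\sqrt{Q}a_k)'h^{k+1}$ uniformly on $\Omega_{2\delta}$. This requires a disk of fixed radius around each point of $\Omega_{2\delta}$ to lie inside $\Omega_\delta$. That holds because $\Im\mathfrak S_a$ has a bounded gradient $|\sqrt Q|$ only on bounded sets, so near $\infty$ I will use the local radius $\sim\delta/|\sqrt{Q(x)}|$ instead. That radius still gives uniform asymptotics, with constants possibly depending on $x$ polynomially, and this suffices for identifying coefficients pointwise. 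Multiplying asymptotic expansions is legitimate, so substituting into $hS'+S^2-Q=0$ forces every coefficient of $h^n$ in the formal identity to vanish. This works because an asymptotic expansion of the zero function has zero coefficients.

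Finally, I compare coefficients. The $h^0$ and $h^1$ terms vanish by the choices of $s_0$ and $s_1$. The coefficient of $h^n$ for $n\ge2$ gives exactly the recursion above for $\tilde s_n=\sqrt{Q}a_{n-1}$, and by induction $\tilde s_n=s_n$. The main obstacle is the differentiation step, i.e.\ justifying that the asymptotic expansion of $g$ can be differentiated in $x$. I would handle it via Cauchy estimates on shrunken domains, as above. Alternatively, one can avoid differentiating the expansion by substituting the expansions into the integral equation \eqref{3.2} and applying Watson's lemma, which reproduces the same recursion.
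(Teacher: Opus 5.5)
Your proposal is correct and takes the same route as the paper. The paper writes $S\sim s_0+s_1h+\sum_{k\ge1}\sqrt{Q}\,a_kh^{k+1}$ and concludes $\sqrt{Q}\,a_{n-1}=s_n$ from uniqueness of the formal solution of the Riccati equation. You make explicit the two steps the paper leaves implicit: that $S$ actually solves $hS'+S^2-Q=0$ (via Proposition \ref{prop2.4} and termwise summation), and that the expansion may be differentiated in $x$. For the derivative, Cauchy estimates on any small disc inside the open set $\Omega_\delta$ already suffice for the pointwise coefficient comparison, so your discussion of radii near $\infty$ is unnecessary.
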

	\begin{proof}
		We have $W(x,h) =\sqrt{Q(x)}g(x,h) \sim \sqrt{Q(x)}( \sum_{k=1}^{\infty} a_{k}(x)h^k)$ and 
	\begin{equation*}
			\begin{aligned} &
		 {S(x,h)}
		 \sim  s_0(x) -\frac{Q'(x)}{4Q(x)}h+\sqrt{Q(x)}\sum_{k=1}^{\infty} a_{k}(x)h^{k+1}.
			\end{aligned}
		\end{equation*}	
		It follows from uniqueness of formal solution 	that $s_n(x)=\sqrt{Q(x)} a_{n-1}(x)$ for $n\geq 2$.		
	\end{proof}
	\section{\normalsize Solutions of $h^2 y''+Q(x)y=0$}	 
		\subsection{\normalsize Behavior of $\varphi(t,x)$} 
			The purpose of this section is  to define integral $\exp\big(\frac{1}{h} \int^x S_+(\tau,h) d\tau$ and construct
   		a solution of  $h^2 y''+Q(x)y=0$.  As before  $\Omega$ is a Stokes domain and
		 $a \in \partial \Omega$ is a turning point.  
		It is defined by
	 \begin{equation}
	 	 	\begin{aligned}  \  &
			\exp\big(\frac{1}{h} \int^x S_+(\tau,h) d\tau\big)
		      = \frac{1}{Q(x)^{1/4}}	\exp\big(\frac{1}{h}\int_a^x\sqrt{Q(\tau)}d\tau+\int_{\Gamma^*} W_+(\tau,h)d\tau\big) \\
			  &= \frac{1}{Q(x)^{1/4}} 
					\exp \big(\frac{1}{h}\int_a^x\sqrt{Q(\tau)}d\tau+\int^0_{-\infty}g_{+}(\varphi(t,x),h)dt.
	 	 	\end{aligned} \label{4.1}
	 	 \end{equation}
		 $\int_{\Gamma^*} \cdot\cdot\ d\tau$ is integration from $\infty$ to $x$ 	along path ${\Gamma^*}$. 
       	Let us get the behavior of $\varphi(t,x)$ more precisely to show 
		$\int^0_{-\infty}g_+(\varphi(t,x),h)dt$ is integrable.
 	\begin{prop}  Let $K\subset \Omega$ be compact. Then there exist positive constants $C_K, T_K$ 
			     such that $|\varphi(t,x))| \geq C_K|t|^{2/(m+2)}$ for $x\in K$ and $|t|\geq T_K$. \label{prop-decay}
	  \end{prop}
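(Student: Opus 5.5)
The plan is to compare the two sides of the identity \eqref{2.3}, ${\mathfrak S}_a(\varphi(t,x))={\mathfrak S}_a(x)+t$, using the growth of ${\mathfrak S}_a$ at infinity. Since $Q$ is a polynomial of degree $m$ with leading coefficient $q_m\neq 0$, we have $\sqrt{Q(\tau)}=O(|\tau|^{m/2})$ as $\tau\to\infty$. Integrating along a path from $a$ to $y$ that is almost a straight segment (avoiding the turning points inside a fixed disc costs only a bounded length), we get a constant $A>0$ with
\begin{equation*}
|{\mathfrak S}_a(y)|\leq A(1+|y|)^{(m+2)/2}\qquad \text{for all } y\in\Omega .
\end{equation*}
The branch of $\sqrt{Q}$ and of the integral is the one fixed on the simply connected Stokes domain $\Omega$. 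I expect this upper bound to be the only point needing care. ${\mathfrak S}_a$ is defined by analytic continuation in $\Omega$, and the straight-line path need not stay in $\Omega$. So I would estimate instead along the trajectory itself, or write ${\mathfrak S}_a(y)=\frac{2}{m+2}\sqrt{q_m}\,y^{(m+2)/2}(1+O(1/y))+\text{const}$ (plus a possible logarithmic term when $m$ is even). This expansion holds in a neighbourhood of $\infty$ minus a slit, and each branch satisfies the same upper bound; a log term is harmless.

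Next I take the compact set $K$. By continuity of ${\mathfrak S}_a$ on $\Omega$ there is $B_K$ with $|{\mathfrak S}_a(x)|\leq B_K$ for $x\in K$. Then \eqref{2.3} gives, for $x\in K$,
\begin{equation*}
|t|-B_K\leq |{\mathfrak S}_a(x)+t| = |{\mathfrak S}_a(\varphi(t,x))|\leq A(1+|\varphi(t,x)|)^{(m+2)/2}.
\end{equation*}
Here $\varphi(t,x)$ exists for all real $t$ by Lemma \ref{lem2.1}(1) and stays in $\Omega$.

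Now choose $T_K\geq 2B_K$. For $|t|\geq T_K$ the left side is at least $|t|/2$, so $1+|\varphi(t,x)|\geq (|t|/(2A))^{2/(m+2)}$. To absorb the $1$, I use Lemma \ref{lem2.3}(3) and its analogue for $t\to-\infty$: enlarging $T_K$ if necessary, we get $|\varphi(t,x)|\geq 1$ for $x\in K$ and $|t|\geq T_K$. Then $1+|\varphi|\leq 2|\varphi|$, which yields $|\varphi(t,x)|\geq C_K|t|^{2/(m+2)}$ with $C_K=\tfrac12(2A)^{-2/(m+2)}$.
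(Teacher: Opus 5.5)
\textbf{Verdict.} Your argument is correct in outline, but it takes a different route from the paper, and the one estimate it rests on is only partly justified.

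\textbf{Comparison with the paper.} The paper never bounds ${\mathfrak S}_a$ on $\Omega$. It works along the orbit:
\begin{itemize}
\item it sets $H(y)=(m/2+1)y^{m/2}/\sqrt{Q(y)}$ and observes $H(y)=\pm c+H_1(y)$ with $|H_1(y)|\le |c|/2$ for $|y|\ge R$;
\item it uses Lemma \ref{lem2.3}(3) to get $|\varphi(t,x)|\ge R$ for $x\in K$, $|t|\ge T$;
\item it integrates $\frac{d}{dt}\varphi(t,x)^{m/2+1}=H(\varphi(t,x))$ from $T$ to $t$, which gives $|\varphi(t,x)|^{m/2+1}\ge |c|(t-T)/2-|\varphi(T,x)|^{m/2+1}$.
\end{itemize}
This is a purely local differential inequality. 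It needs only $\sqrt{Q(y)}\sim\sqrt{q_m}\,y^{m/2}$ at infinity, and no additive constants, branches of the primitive or logarithms ever enter. The price is that Lemma \ref{lem2.3}(3) is an essential input.

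You instead put everything into one a priori bound $|{\mathfrak S}_a(y)|\le A(1+|y|)^{(m+2)/2}$ on $\Omega$. After that, the conservation law \eqref{2.3} finishes the proof in two lines. Your route does not actually need Lemma \ref{lem2.3}(3): the inequality $1+|\varphi(t,x)|\ge (|t|/(2A))^{2/(m+2)}$ by itself forces $|\varphi(t,x)|\ge 1$ once $|t|$ is large. It also gives a $C_K$ independent of $K$.

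\textbf{The step that needs more justification.} The global bound carries all the content, and you only sketch why it holds. The expansion at infinity determines ${\mathfrak S}_a$ only up to an additive constant on each end of $\Omega\cap\{|y|>R\}$. Branches of $\int\sqrt{Q}$ differ by sign and by periods, and the periods in general form an unbounded set. So ``each branch satisfies the same upper bound'' does not by itself give a uniform $A$. You also need two facts:
\begin{itemize}
\item ${\mathfrak S}_a$ is bounded on $\Omega\cap\{|y|\le R\}$, because it extends continuously to the finite part of $\partial\Omega$, which consists of finitely many Stokes curves;
\item $\Omega$ has finitely many ends, each lying in a sector, so the additive constants and the $\log$ term (for even $m$) are controlled.
\end{itemize}
Both are true for Stokes domains, and they are of the same kind the paper uses tacitly in Lemma \ref{lem2.3}, but you should state them. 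If you want to avoid the global geometry, localize the comparison along the orbit after time $T$; done in differential form, that is exactly the paper's proof.
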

	  \begin{proof}  We may assume $t>0$.  Let $H(y)=\frac{(m/2+1)y^{m/2}}{\sqrt{Q(y)}}$. 	 There exist $c\not=0$
		and $R>0$ such that  $H(y)=\pm c +H_1^{\pm}(y)$ with $ |H_1^{\pm}(y)|\leq |c|/2 $ for $|y|\geq R$.
		 We have  
  	     \begin{equation*}
	       	\begin{aligned}  &
	       	(m/2+1)\varphi(t,x)^{m/2}\varphi'(t,x)=\frac{(m/2+1)\varphi(t,x)^{m/2}}{\sqrt{Q(\varphi(t,x)}} ,\\
	         & \frac{d}{dt}\varphi(t,x)^{m/2+1} 
	         =H(\varphi(t,x)) .  
	       	\end{aligned}
 	      \end{equation*}
			 It follows from Lemma \ref{lem2.3} 
			that for any $R>0$ there is $T=T(K,R)>0$ such that 
			$ | \varphi(t,x)|\geq R$ for $ x\in K$ and $ |t|\geq T$. 
			We may assume that 	 $H(\varphi(t,x))=c +H_1(\varphi(t,x))$ with $ |H_1(\varphi(t,x))|\leq |c|/2 $
			for $|t|\geq T$.   We have  
		\begin{equation}
	 	      	\begin{aligned} \ &
 		      	   \varphi(t,x)^{m/2+1}=\varphi(T,x)^{m/2+1}+c(t-T)+\int_{T}^t H_1(\varphi(\tau,x))d\tau.
 		      	\end{aligned} \label{varT''}
 	      \end{equation}
		Let $t\geq T$.  We have from \eqref{varT''}
		\begin{equation*}
       	\begin{aligned} 
  		& 	|\varphi(t,x)|^{m/2+1} \geq |c|(t-T)/2-|\varphi(T,x)|^{m/2+1}
       	\end{aligned}
       \end{equation*}
		 for  $t >T$. 	Hence there exist $C'>0$ and $T'>0$ such that $|\varphi(t,x)|^{m/2+1}\geq C' t$ for $t>T'$.
  	      It also holds that $|\varphi(t,x)^{m/2+1}| \geq C'| t|$ for $t<-T'' $.  
      \end{proof}
	\begin{cor}  Let $K$ is a compact set in $\Omega$.  Then there exist a constant $M_K>0$ such that
		for $x\in K$ 
		\begin{equation}
		 \Big	|\frac{C(\varphi(t,x))}{Q(\varphi(t,x)} \Big |\leq\frac{M_K}{(1+|t|)^2}, \;   -\infty<t<+\infty. 
		\end{equation} \label{cor-decay'}
	  \end{cor}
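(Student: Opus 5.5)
We combine Lemma \ref{lem3.1}-(2) with Proposition \ref{prop-decay}. The point is that along the flow $\varphi(t,x)$ the ratio $C/Q$ decays like $|y|^{-(m+2)}$ in $y$. Proposition \ref{prop-decay} says $|\varphi(t,x)|$ grows at least like $|t|^{2/(m+2)}$. Hence $|y|^{-(m+2)}$ becomes $|t|^{-2}$, which is exactly the claimed decay.

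First we choose $\delta>0$ with $K\subset\Omega_\delta$; this is possible since $K$ is compact and $\Im\mathfrak S_a$ is continuous with values strictly between $0$ and $\mu$ on $\Omega$. By (\ref{2.5}) with $s=0$, $\varphi(t,x)\in\Omega_\delta$ for all real $t$ and all $x\in K$. Then we take $\epsilon>0$ with $\Omega_\delta\subset D_\epsilon$ (remarked after the definition of $D_\epsilon$). Lemma \ref{lem3.1}-(2) then gives, uniformly in $x\in K$ and $t\in\mathbb R$,
\begin{equation*}
\Big|\frac{C(\varphi(t,x))}{Q(\varphi(t,x))}\Big|\leq \frac{C_\epsilon}{(1+|\varphi(t,x)|)^{m+2}}.
\end{equation*}
For bounded $t$ this already yields a uniform constant bound $C_\epsilon$.

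Next, for $|t|\geq T_K$ we apply Proposition \ref{prop-decay}: $|\varphi(t,x)|\geq C_K|t|^{2/(m+2)}$, so $(1+|\varphi(t,x)|)^{m+2}\geq C_K^{m+2}|t|^2$. Since $|t|\geq T_K>0$, we have $|t|^2\geq c\,(1+|t|)^2$ with $c=(T_K/(1+T_K))^2$. This gives the bound $C_\epsilon C_K^{-(m+2)}c^{-1}(1+|t|)^{-2}$. For $|t|\leq T_K$ we use $C_\epsilon\leq C_\epsilon(1+T_K)^2(1+|t|)^{-2}$. Taking $M_K$ as the maximum of the two constants finishes the proof.

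The main obstacle is uniformity in $x\in K$ of the growth estimate. Proposition \ref{prop-decay} is stated with constants depending only on $K$, and it covers both signs of $t$ (its proof treats $t\to+\infty$ and remarks that the same holds for $t\to-\infty$, using Lemma \ref{lem2.3}-(3) and its analogue for $t\to-\infty$). So we rely on it as given. The only remaining care is that $\varphi(t,x)$ stays in $D_\epsilon$, which is the invariance of $\Im\mathfrak S_a$ along real $t$.
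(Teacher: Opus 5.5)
Your proposal is correct and follows essentially the same route as the paper's proof. You place $K$ in some $\Omega_\delta\subset D_\epsilon$ and use the invariance of $\Im\mathfrak S_a$ along real $t$ together with Lemma \ref{lem3.1}-(2) to get the bound $C_\epsilon(1+|\varphi(t,x)|)^{-(m+2)}$; then you apply Proposition \ref{prop-decay} for $|t|\ge T_K$ and the constant bound for $|t|\le T_K$, writing out explicitly the constants $(T_K/(1+T_K))^2$ and $(1+T_K)^2$ that the paper leaves implicit.
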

		\begin{proof}
		Let $ K \subset \Omega_{\delta}$ and $y \in \Omega_{\delta}$.  Then 
		there exists $C_{\delta}>0$ such that $|\frac{C(y)}{Q(y)}\leq C_{\delta}|/(1+|y|)^{m+2}$ and 
	     $|\frac{C(\varphi(t,x)}{Q((\varphi(t,x))}\leq C_{\delta}/(1+|(\varphi(t,x)^{m+2}|)$.  
	   	 If $|t|\geq T_K$,  $|\varphi(t,x)^{m+2}|\geq C'|t|^2$.  If $|t|\leq T_K$, 
		 $|\frac{C(\varphi(t,x)}{Q((\varphi(t,x))}|\leq C_{\delta}$.  Hence the estimate holds.  
		\end{proof}
	\subsection{\normalsize Construction of solutions }  
	  We construct solutions of \eqref{WKB}.  Let  $\Omega$ be a Stokes domain and $K\subset \Omega$ be a compact	 set.
	 We have from  Corollary \ref{cor-decay'} for $t\leq 0$
	\begin{equation*}
			\begin{aligned} 
		\Theta(\varphi(t,x)) & =\sup_{s\leq 0} 
		\Big	|\frac{C(\varphi(s,\varphi(t,x))}{Q(\varphi(s,\varphi(t,x))}\Big| =\sup_{s\leq 0}
		\Big	|\frac{C(\varphi(s+t,x))}{Q(\varphi(s+t,x))}\Big| \\ &  \leq \sup_{s\leq 0} \frac{M_K}{(1+|s+t|)^2 }.
		 \leq \frac{M_K}{(1+|t|)^2 }.
			\end{aligned}
		\end{equation*}	
		Hence it follows from $|g_+(\varphi(t,x),h)|\leq Mh\Theta(\varphi(t,x))\ (t\leq 0$) 
		that
		$|g_+(\varphi(t,x),h)|\leq M'_K h/(1+|t|)^2$.  
		We  can  define integration $\exp(h^{-1} \int^x S_+(\tau,h) d \tau)$
		 for $0<h<C\Theta(x)^{-1/2}$ as follows:  Take integral  path  $\gamma_x(+)= \Gamma^*$ and
		 \begin{equation}
	 	 	\begin{aligned} 
		U_+(x,h)= &\exp\big(\frac{1}{h} \int^x S_+(\tau,h) d\tau\big) \\
		\overset{\rm def.}{=}\frac{1}{Q(x)^{1/4}} &
			\exp\big(\frac{1}{h}\int_a^x\sqrt{Q(\tau)}d\tau+\int_{\Gamma^*} W_+(\tau,h)d\tau\big) \\
			=\frac{1}{Q(x)^{1/4}} &
					\exp \big(\frac{1}{h}\int_a^x\sqrt{Q(\tau)}d\tau+\int^0_{-\infty} g_+(\varphi(t,x),h)dt\big).
	 	 	\end{aligned}  \label{4.4}
	 	 \end{equation}
	    \quad	There exists another solution of Ricatti equation \eqref{Ric} by taking $s_0(x)=-\sqrt{Q(x)}$.
		We get $ W_{-}(x,h)$, 	by solving
			\begin{equation}
				 W'_{-}-\big(\frac{2\sqrt{Q(x)}}{h}+\frac{Q'(x)}{2Q(x)}\big)W_{-}+ W_{-}^2=C(x)
			 \label{W-}
			\end{equation}
  		in the same way as $W_{+}(x,h)$.  Let
			\begin{equation} \left \{
				\begin{aligned} \ &
				S_{-}(x,h)=-\sqrt{Q(x)}- \frac{Q'(x)}{4Q(x)}h+W_{-}(x,h)h, \\
				&   W_{-}(x.h)=\sqrt{Q(x)}g_{-}(x,h),
				\end{aligned}  \right .
			\end{equation}
  	 	$\gamma_{-}(x)=\Gamma_*$ 
		and 
	 \begin{equation}
	 	 	\begin{aligned} 
		U_-(x,h)= &\exp\big(\frac{1}{h} \int^x S_-(\tau,h) d\tau\big) \\
			= \frac{1}{Q(x)^{1/4}} &	\exp\big(-\frac{1}{h}\int_a^x\sqrt{Q(\tau)}d\tau+\int_{\Gamma_*} W_-(\tau,h)d\tau\big) \\
			=\frac{1}{Q(x)^{1/4}} &
					\exp \big(-\frac{1}{h}\int_a^x\sqrt{Q(\tau)}d\tau+\int^0_{+\infty} g_-(\varphi(t,x),h)dt\big).
	 	 	\end{aligned}  \label{4.7}
	 	 \end{equation}
 		     $\int_{\Gamma_*}\cdot \cdot d\tau$ is integration from $\infty$ to $x$ 
			along path ${\Gamma_*}$.
			 Consequently we have 2 independent  solutions of \eqref{WKB}.
		\begin{thm}  Let $x\in \Omega_{\delta}$. There exist 2 independent  solutions $U_{+}(x.h)$ and $U_{-}(x.h)$ 
			of \eqref{WKB}	for $0<h<H(\delta)$.
		\end{thm}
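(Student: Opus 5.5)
The plan is to verify directly that $U_{+}$ and $U_{-}$ solve \eqref{WKB} on $\Omega_\delta$, and then to show they are independent by computing their Wronskian asymptotically in $h$.

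\emph{Step 1: $U_+$ is a solution.} I would check that the function defined in \eqref{4.4} satisfies $hU_+'=S_+(x,h)U_+$ with $S_+=\sqrt{Q}-\frac{Q'}{4Q}h+hW_+$. The factor $Q(x)^{-1/4}$ contributes $-\frac{Q'}{4Q}$ to the logarithmic derivative, and $\frac1h\int_a^x\sqrt Q$ contributes $\frac{\sqrt{Q(x)}}{h}$. It remains to show that $\frac{d}{dx}\int_{\Gamma^*}W_+(\tau,h)\,d\tau=W_+(x,h)$. To do this I would use the flow property $\varphi(t,\varphi(s,x))=\varphi(t+s,x)$. Moving the base point along the trajectory gives
\[
G(\varphi(s,x))=G(x)+\int_{-\infty}^{s}g_+(\varphi(t,x),h)\,dt-\int_{-\infty}^{0}g_+(\varphi(t,x),h)\,dt ,
\]
where $G(x)=\int^0_{-\infty}g_+(\varphi(t,x),h)\,dt$. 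Hence $\partial_s G(\varphi(s,x))=g_+(\varphi(s,x),h)$, and by \eqref{2.4} this equals $G'(x)/\sqrt{Q(x)}$ at $s=0$. So $G'=\sqrt Q\,g_+=W_+$, provided $G$ is holomorphic. Holomorphy follows from locally uniform convergence of the integral, via the bound $|g_+(\varphi(t,x),h)|\le M'_Kh/(1+|t|)^2$ derived just above for compact $K\subset\Omega_\delta$. Differentiating once more and using the Riccati equation \eqref{Ric} (equivalently \eqref{W-dif}, which $W_+$ satisfies by Proposition \ref{prop2.4} applied to \eqref{3.2}), I get
\[
h^2U_+''=(hS_+'+S_+^2)U_+=QU_+ .
\]
To apply Proposition \ref{prop2.4} here I need $f=C-W_+^2$ to satisfy \eqref{f.est}. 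That holds because $|W_+^2/Q|=|g_+|^2\le (Mh\Theta)^2$ and $\Theta$ is bounded on $\Omega_\delta$ by Lemma \ref{lem3.1}.

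\emph{Step 2: $U_-$.} The same argument applies to $U_-$, using $\Gamma_*$, the $t\to+\infty$ analogues of Lemma \ref{lem2.3}, Proposition \ref{prop-decay} and Corollary \ref{cor-decay'}, and equation \eqref{W-}.

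\emph{Step 3: independence.} Write $U_\pm=\exp(\frac1h\int^x S_\pm)$. Then
\[
\mathcal W(U_+,U_-)=\frac1h\,(S_--S_+)\,U_+U_- .
\]
The Wronskian is constant in $x$, so it suffices to show it is nonzero at one point. Here
\[
S_--S_+=-2\sqrt{Q(x)}+h\sqrt{Q(x)}\,(g_--g_+),
\qquad |g_\pm|\le M(\delta)h
\]
by the corollary to Theorem \ref{th3.3}. So for $h$ small, shrinking $H(\delta)$ if necessary, $S_--S_+\ne0$ on $\Omega_\delta$, since $Q\ne0$ there. Both $U_\pm$ are exponentials and hence nonvanishing. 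Thus $\mathcal W\neq0$ and the two solutions are independent.

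\emph{Main obstacle.} I expect the main difficulty to be Step 1, namely justifying the differentiation of $\int_{-\infty}^0 g_+(\varphi(t,x),h)\,dt$ in $x$. This needs uniform integrable decay on compacta, which Corollary \ref{cor-decay'} supplies, together with the fact that $\varphi(t,x)$ stays in $\Omega_\delta$, so that the bounds of Theorem \ref{th3.3} hold uniformly along the whole trajectory.
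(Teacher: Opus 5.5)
Your proposal is correct and follows the paper's route: the paper defines $U_{\pm}$ by \eqref{4.4} and \eqref{4.7}, uses the decay bound $|g_+(\varphi(t,x),h)|\le M'_K h/(1+|t|)^2$ (obtained from Corollary \ref{cor-decay'}) to make the trajectory integrals converge, and then simply asserts that these are two independent solutions. Your flow-identity computation of $\frac{d}{dx}\int_{\Gamma^*}W_+\,d\tau=W_+$, the reduction to the Riccati equation via Proposition \ref{prop2.4}, and the Wronskian check $S_--S_+=\sqrt{Q}\,\bigl(-2+h(g_--g_+)\bigr)\neq 0$ supply the verifications the paper leaves implicit, in particular the independence, which the paper does not argue at all.
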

			Thus we get WKB-genuine solutions $U_{\pm}(x.h)$,  
	\subsection{\normalsize Adjoining Stokes domain}  
			Let  $\Omega$ and $\Omega'$ be  Stokes domains  adjoining each other and $a$ be a mutual turning point
			and ${\mathfrak S}_a(x)= \int_a^x \sqrt{Q(\tau)}d\tau$.
			Suppose that ${\mathcal C}=\partial\Omega \cap \partial \Omega'$ is a Stokes curve emanating from 
			$a$ and ending at $\infty$.  \\ \quad
		  Then $\Im \int_a^x \sqrt{Q(\tau)}d\tau =0$ on $ {\mathcal C}$.  Let
			$ \widetilde{\Omega}= \Omega\cup\Omega'\cup( {\mathcal C}-\{a\})$.  	 We assume
		\begin{equation} 
			\begin{aligned}
  			 \Im \int_a^x \sqrt{Q(\tau)}d\tau >0 \;\; x \in \Omega,  \quad
				\Im \int_a^x \sqrt{Q(\tau)}d\tau <0 \;\; x \in \Omega'  
			\end{aligned} 		\end{equation}	
		Then it follows from Lemma  \ref{lem2.1}-(2) that 
		\begin{enumerate}
			\item[{\rm (1)}] Suppose 
				$ \Re \int_a^x \sqrt{Q(\tau)}d\tau <0 $ for  $ x \in  {\mathcal C}-\{a\}$. Then
			       $\varphi(t,x)$ ($ x \in  \widetilde{\Omega}$) exists on $-\infty<t \leq 0$.
			\item[{\rm (2)}] 
				Suppose 
				$ \Re \int_a^x \sqrt{Q(\tau)}d\tau >0$ for  $ x \in  {\mathcal C}-\{a\}$. Then
			       $\varphi(t,x)$ ($ x \in  \widetilde{\Omega}$) exists  on  $0 \leq t <+\infty $.  
			\end{enumerate}
		Let  $\mu=\sup_{x\in \Omega}\Im {\mathfrak S}_a(x)$ and  
		$\mu'=\inf _{x\in \Omega' } \Im{\mathfrak S}_a(x)$,  $-\infty\leq \mu'<0<\mu \leq +\infty$. 
		 For any small $\delta, \epsilon>0$
	\begin{equation*} 
			\begin{aligned}\ &
			\widetilde{\Omega}_{\delta}=\{x \in \widetilde{\Omega}; 
			\mu'+\delta<\Im{\mathfrak S}_a(x)<\mu-\delta\}.
			\end{aligned} 
	\end{equation*}	 
	If case (1)	,
	\begin{equation*} 
			\begin{aligned}\
			& 	\widetilde{\Omega}_{\epsilon,\delta}(+)=\widetilde{\Omega}_{\delta}-
			 \{x \in \widetilde{\Omega}; \Re {\mathfrak S}_a(x)>-\epsilon, 	|\Im{\mathfrak S}_a(x)|<\delta\}.
			\end{aligned} 
	\end{equation*}
	If case (2),		
	\begin{equation*} 
			\begin{aligned}\
			& \widetilde{\Omega}_{\epsilon,\delta}(-)=\widetilde{\Omega}_{\delta}-
			 \{x \in \widetilde{\Omega}; \Re {\mathfrak S}_a(x)<\epsilon, 
			|\Im{\mathfrak S}_a(x)|<\delta\}. 
			\end{aligned} 
	\end{equation*}		
		Estimate $|\frac{C(x}{Q(x)}|\leq C_{\epsilon, \delta}/(1+|x|)^{m+2}$ holds for 
		$x \in  \widetilde{\Omega}_{\epsilon, \delta}(\pm)$.  \\
\begin{minipage}{7cm}
	\includegraphics [width=70mm, height=50mm] {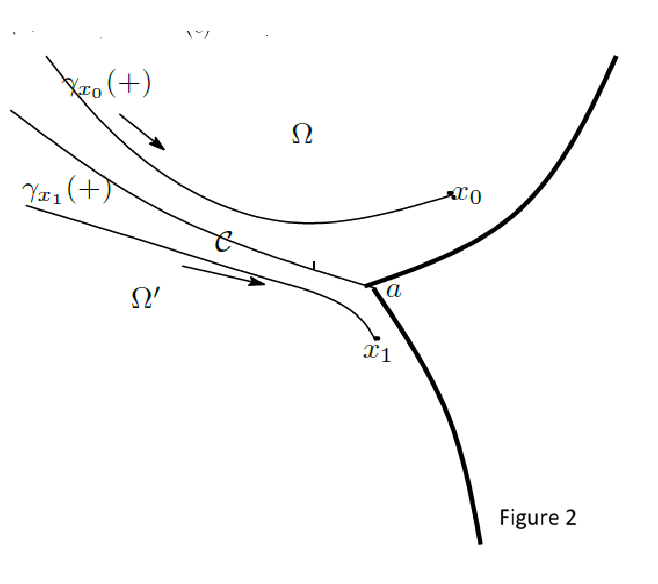}  
\end{minipage}
\hspace{5mm}
\begin{minipage}{5cm}
  	$ \Re \int_a^x \sqrt{Q(\tau)}d\tau <0 $ on  $ {\mathcal C}-\{a\}$ is assumed in Figure 2. 
  Let $x\in {\mathcal C}-\{a\}$. Then $\{\varphi(t,x); t\leq 0\}\subset 
{\mathcal C}-\{a\}$. Hence every orbit $\{\varphi(t,x); t\leq 0\}$ for $x\in \widetilde{\Omega}$ is
included in $\widetilde{\Omega}$.
\end{minipage} \\
	        The existence  and behavior of $\{\varphi(t,x); -\infty< t \leq 0\}$ ( $\{\varphi(t,x); 0\leq t<+\infty \}$)
		are essentially used to  construct $U_{+}(x,h)$ \eqref{4.4} (resp.  $U_{-}(x,h)$ \eqref{4.7}). The preceding method is
		available for $	\widetilde{\Omega}_{\epsilon,\delta}(+)$ (resp. $ \widetilde{\Omega}_{\epsilon,\delta}(-))$.
	\begin{thm}  For any $\epsilon, \delta>0$ there exists $H_{\epsilon,\delta}>0$  such that 
		the  followings hold	for  $0<h<H_{\epsilon, \delta}$.
				\begin{enumerate}
			\item[{\rm (1)}] Suppose that 
				$ \Re \int_a^x \sqrt{Q(\tau)}d\tau <0 $ for $ x \in  {\mathcal C}-\{a\} $. Then  
				$U_{+}(x,h)$ defined by \eqref{4.4} is holomorphically extensible
				to $\widetilde{\Omega}_{\epsilon, \delta}(+)$.
			\item[{\rm (2)}] Suppose that 
				$ \Re \int_a^x \sqrt{Q(\tau)}d\tau >0$ for $ x \in  {\mathcal C}-\{a\}$. Then 
					$U_{-}(x,h)$defined by \eqref{4.7} is holomorphically extensible
				to $\widetilde{\Omega}_{\epsilon, \delta}(-)$. 
			\end{enumerate} \label{th4.4}
		\end{thm}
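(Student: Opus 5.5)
I treat case (1); case (2) follows by reversing time and using $\Gamma_*$ and $K_-$ in place of $\Gamma^*$ and $K_+$. The plan is to rerun the constructions of Sections 3 and 4 on $\widetilde{\Omega}_{\epsilon,\delta}(+)$ in place of $\Omega_\delta$. The one new ingredient needed is that for every $x\in\widetilde{\Omega}_{\epsilon,\delta}(+)$ the backward orbit $\{\varphi(t,x);t\le 0\}$ exists, stays in a region where $|C/Q|\le C_{\epsilon,\delta}(1+|y|)^{-m-2}$, and tends to $\infty$ at the rate of Proposition \ref{prop-decay}.

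\emph{Step 1: the backward orbits.} By \eqref{2.5}, along $\varphi(t,x)$ with $t\le 0$ the quantity $\Im{\mathfrak S}_a$ is constant and $\Re{\mathfrak S}_a$ decreases. I split $x\in\widetilde{\Omega}_{\epsilon,\delta}(+)$ into three cases.
\begin{enumerate}
\item[(i)] If $x\in\Omega_\delta$ or $x\in\Omega'_\delta$, this is Lemma \ref{lem2.1}-(1) applied in the respective domain.
\item[(ii)] If $x\in{\mathcal C}-\{a\}$, the orbit stays on ${\mathcal C}-\{a\}$ by Lemma \ref{lem2.1}-(2), with $\Re{\mathfrak S}_a(\varphi(t,x))=\Re{\mathfrak S}_a(x)+t\le-\epsilon$.
\item[(iii)] If $x\in\widetilde{\Omega}$ with $|\Im{\mathfrak S}_a(x)|<\delta$, then by definition of $\widetilde{\Omega}_{\epsilon,\delta}(+)$ we have $\Re{\mathfrak S}_a(x)\le-\epsilon$. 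This persists for $t\le 0$, so the orbit never enters the excluded set $\{\Re{\mathfrak S}_a>-\epsilon,\ |\Im{\mathfrak S}_a|<\delta\}$.
\end{enumerate}
In every case the orbit stays in $\widetilde{\Omega}_{\epsilon,\delta}(+)$, away from turning points. Here I use that ${\mathfrak S}_a$ is locally injective off $T_p$ and that the excluded set contains a neighbourhood of $a$ within $\widetilde\Omega$, together with the remark that the estimate $|C/Q|\le C_{\epsilon,\delta}/(1+|x|)^{m+2}$ holds there. The existence argument of Lemma \ref{lem2.1}-(1) then applies verbatim, since the possible limit point $c$ again lies in a region where $Q\ne0$. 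The same holds for Lemma \ref{lem2.3} and Proposition \ref{prop-decay}, so for compact $K\subset\widetilde{\Omega}_{\epsilon,\delta}(+)$ we get $|C(\varphi(t,x))/Q(\varphi(t,x))|\le M_K(1+|t|)^{-2}$ for $t\le0$. Consequently $\Theta(x)\le C'_{\epsilon,\delta}$ uniformly on $\widetilde{\Omega}_{\epsilon,\delta}(+)$.

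\emph{Step 2: the integral equation.} With $\Theta$ bounded, Proposition \ref{prop3.2} and Theorem \ref{th3.3} go through word for word: the recursion \eqref{3.4}, the semigroup bound $\Theta(\varphi(t,x))\le\Theta(x)$, and the majorant series. They give $g_+(x,h)=\sum g_n$ convergent for $0<h<H_{\epsilon,\delta}:=(2/(\rho C'_{\epsilon,\delta}))^{1/2}$. Holomorphy of $g_+$ in $x$ near points of ${\mathcal C}$ comes from Proposition \ref{prop2.4}, whose hypothesis \eqref{f.est} is exactly the uniform bound from Step 1, together with uniform convergence on compacts.

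\emph{Step 3: the solution.} As in \S4.2, $|g_+(\varphi(t,x),h)|\le M'_Kh(1+|t|)^{-2}$, so the right-hand side of \eqref{4.4} is a holomorphic function on $\widetilde{\Omega}_{\epsilon,\delta}(+)$. By uniqueness of continuation it coincides with $U_+$ on $\Omega_\delta$, which proves the extension; it solves \eqref{WKB} there by the same computation.

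The main obstacle is Step 1. There I must check carefully that backward orbits from points near ${\mathcal C}$ in $\Omega'$ cannot approach $a$ or other turning points on $\partial\Omega'$. I would argue this through the level-set description: a limit point $c$ of the orbit satisfies $\Im{\mathfrak S}_a(c)=\Im{\mathfrak S}_a(x)$ and $\Re{\mathfrak S}_a(c)\le-\epsilon$. If $c$ were a turning point, then $|\Im{\mathfrak S}_a(c)|<\delta$ would force $c=a$, where ${\mathfrak S}_a=0$, contradicting $\Re{\mathfrak S}_a(c)\le-\epsilon$. Turning points with $\Im{\mathfrak S}_a$ at the extreme values $\mu$ or $\mu'$ are excluded by the $\delta$-margin in $\widetilde{\Omega}_\delta$.
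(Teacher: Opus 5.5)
Your proposal is correct and takes the paper's own route, which consists of the remarks preceding Theorem \ref{th4.4}: backward orbits from $\widetilde{\Omega}$ stay in $\widetilde{\Omega}$ by Lemma \ref{lem2.1}-(2), $|C(x)/Q(x)|\leq C_{\epsilon,\delta}(1+|x|)^{-m-2}$ on $\widetilde{\Omega}_{\epsilon,\delta}(+)$, and the arguments of Proposition \ref{prop3.2}, Theorem \ref{th3.3} and Proposition \ref{prop-decay} carry over, so your Steps 1--3 write this out in more detail. One correction to your final paragraph: a turning point $c$ with $\Im{\mathfrak S}_a(c)=0$ need not equal $a$ (take $Q(x)=1-x^2$, $a=-1$, $\Omega$ the Stokes domain above $[-1,1]$, where the turning point $1$ lies on the same boundary component), but by Theorems \ref{Conf} and \ref{th5.4} such points lie on the side of $a$ opposite to ${\mathcal C}$, where $\Re{\mathfrak S}_a>0>-\epsilon$, so they are also inside the excluded set and your conclusion is unaffected.
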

		\begin{rem}
			Suppose $a$ is simple turning point. Then the same result is given in \cite{V}.  Moreover
		       the connection formula of the other solution $U_{-}(x,h)$ 
			($U_{+}(x,h)$) is given in \cite{V}  under the condition (1)  (resp. the condition (2)) . 
		\end{rem}
	\section{\normalsize Remarks on Stokes domains}  
		We study Stokes domain  in detail.  Let $\Omega$  be  a Stokes domain with turning point $a$ and 
		${\Im } {\mathfrak S}_a(x)>0$  in $\Omega$.
		There are 2 cases (1) and (2).  
	     \begin{enumerate}
			\item[(1)]   
			 $\partial\Omega$ is connected and one curve ${\mathcal C}$ with a turning point  
			$a \in { \mathcal C}$.  ${\Im} {\mathfrak S}_a(x)=0$ for $x\in \partial\Omega$ 
		    	\item[(2)]    The connected component of $\partial\Omega$ consists of two curves 
			$\mathcal{C}_1$ and $ \mathcal{C}_2$,
	    			 $\partial\Omega=\mathcal{C}_1\cup \mathcal{C}_2$,  
			$\mathcal{C}_1\cap \mathcal{C}_2 =\emptyset$.  There are turning points  
			$a \in \mathcal{C}_1 $ and 
				$b \in \mathcal{C}_2$. 
				${\Im} {\mathfrak S}_{a}(x)=0$ for $x\in \mathcal{C}_1$ and
		 		${\Im} {\mathfrak S}_{a}(x)={\Im} {\mathfrak S}_a(b)$ for  $x\in \mathcal{C}_2$ 
			 \end{enumerate}
		Let us use  again 
		\begin{equation}
			\begin{aligned} &
			\frac{dy}{dz}=\frac{1}{\sqrt{Q(y)}}  \quad y(0)=x, \; {Q(x)\not =0} \quad y=\varphi(z,x).     
	              \end{aligned} \label{phi}
		\end{equation}  
			and the following equality 
		\begin{equation}
			\begin{aligned}
 		       {\mathfrak S}_a(\varphi(z,x))= {\mathfrak S}_a(x)+z, \; \;  {\mathfrak S}_a(x)=\int_a^x \sqrt{Q(\tau)}d\tau,		
			\end{aligned} \label{5.2}
		\end{equation}
		where $z \in {\mathbb C}$. 
		We give some fundamental property of 
		$\mu=\sup_{x \in \Omega} \Im  {\mathfrak S}_a(x)$ (see section 2).
		\begin{lem} Let $\Omega$ be type (1).  
	     Then 	$\{{\Im} {\mathfrak S}_a(x); x\in \Omega\} = (0,+\infty)$ and  $\mu=+\infty$.	\label{lem5.1}	
        \end{lem}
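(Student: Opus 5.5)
Write $\Phi={\mathfrak S}_a$ and $I=\{\Im\Phi(x);x\in\Omega\}$. The plan is to show first that $I$ is an open interval with left endpoint $0$, and then that it cannot be bounded above. Since $\Omega$ is connected and open and $\Im\Phi$ is continuous and nonconstant on $\Omega$ (harmonic, with $\Phi'=\sqrt{Q}\neq0$ there), $I$ is a connected open subset of $(0,+\infty)$. So $I=(\alpha,\beta)$ with $0\le\alpha<\beta\le+\infty$.

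\emph{Step 1: $\alpha=0$.} Near a point $x_0\in\mathcal C-\{a\}$ with $Q(x_0)\neq0$, $\Phi$ is a local biholomorphism and $\Im\Phi(x_0)=0$. Hence $\Im\Phi$ takes every small positive value at points of $\Omega$ near $x_0$, which gives $\alpha=0$. The flow $\varphi(is,x)$ of \eqref{phi}, for which $\Im\Phi(\varphi(is,x))=\Im\Phi(x)+s$ by \eqref{5.2}, makes this explicit for small $s>0$.

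\emph{Step 2: $\beta=+\infty$, the main obstacle.} Suppose $\beta<\infty$. Take $x\in\Omega$ and follow the vertical flow $s\mapsto\varphi(is,x)$, which is locally defined as long as the orbit avoids turning points. Let $s^*$ be the supremum of the $s$ for which the orbit is defined and stays in $\Omega$; then $s^*\le\beta-\Im\Phi(x)<\infty$. As $s\uparrow s^*$, I would argue as in Lemma \ref{lem2.1} that $\varphi(is,x)$ accumulates at a finite point $c$. Indeed $|\varphi_z|=|Q(\varphi)|^{-1/2}$ is bounded for large $|\varphi|$, so the orbit has finite length and cannot reach $\infty$ in finite $s$. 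The point $c$ lies on $\partial\Omega$ or is a turning point. But $\partial\Omega=\mathcal C$ consists of points with $\Im\Phi=0$, while $\Im\Phi(c)=\Im\Phi(x)+s^*>0$, so $c\notin\partial\Omega$ unless $c$ is a turning point in the closure. A turning point $c$ on $\mathcal C$ also has $\Im\Phi(c)=0$, because $\mathcal C$ is a chain of jointed Stokes curves along which $\Im\Phi$ stays $0$. A turning point inside $\Omega$ is impossible, since Stokes curves emanate from every turning point and $\Omega$ contains none. Hence $c\in\Omega$ is a regular point, and the flow extends past $s^*$ while staying in $\Omega$, contradicting the choice of $s^*$. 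Therefore the orbit exists for all $s\ge0$, and $\Im\Phi(\varphi(is,x))\to+\infty$.

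The delicate point in Step 2 is that the orbit might leave $\Omega$ only through $\partial\Omega$, and on $\partial\Omega$ the value of $\Im\Phi$ is $0$, strictly less than the values along the orbit. I would make this precise by showing that $\varphi(is,x)$ stays in the open set $\Omega$. The set of $s\in[0,s^*)$ with $\varphi(is,x)\in\Omega$ is open, and it is closed because its limit points cannot lie on $\partial\Omega$ (where $\Im\Phi=0$). Combining Steps 1 and 2 gives $I=(0,+\infty)$ and $\mu=\sup I=+\infty$.
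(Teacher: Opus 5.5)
Your proposal is correct and follows the paper's argument: flow vertically along $s\mapsto\varphi(is,x)$, so that $\Im {\mathfrak S}_a$ increases by $s$, and note that the orbit cannot leave $\Omega$ because $\Im {\mathfrak S}_a=0$ on $\partial\Omega$. You also supply two things the paper leaves implicit, namely the lower endpoint $0$ and the continuation argument for global existence of the flow. One small slip: boundedness of $|Q(\varphi)|^{-1/2}$ for large $|\varphi|$ does not by itself give finite length, since this quantity blows up near turning points; it does keep the orbit bounded, which is all you actually use, and in fact the speed is bounded along the whole orbit because $\Im {\mathfrak S}_a\geq \Im {\mathfrak S}_a(x)>0$ keeps it away from the turning points on $\partial\Omega$.
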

	\begin{proof}
     		 Let  $x\in \Omega$ and $z= is,  s \geq 0$ in \eqref{phi}. Then
		\begin{equation*}
			\begin{aligned} &
          		{\Re}  {\mathfrak S}_a(\varphi(is,x))= {\Re}  {\mathfrak S}_a(x), \,\, 
			{\Im} {\mathfrak S}_a(\varphi(is,x))={\Im}  {\mathfrak S}_a(x)+s> 0,
			\end{aligned}   
		\end{equation*}	
		${\Im} {\mathfrak S}_a(x)=0$ for $x \in \partial\Omega$.   Hence 
		${\mathfrak S}_a(\varphi(is,x))\subset \Omega$ and  	
		$\{{\Im}{\mathfrak S}_a(x); x\in \Omega\} =(0,+\infty)$. 
		\end{proof}
		We have in the same way	
   	   \begin{lem} 
		    Let $\Omega$ be a type (2). 
			Then $\{{\Im} {\mathfrak S}_a(x); x\in \Omega\} = (0,{\Im} {\mathfrak S}_a(b))$ and
			$\mu={\Im} {\mathfrak S}_a(b) $. 
		 \label{lem5.2}
	    \end{lem}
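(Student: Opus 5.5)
The statement to prove is Lemma \ref{lem5.2}: for $\Omega$ of type (2), $\{\Im {\mathfrak S}_a(x); x\in\Omega\}=(0,\Im {\mathfrak S}_a(b))$. I will mimic the proof of Lemma \ref{lem5.1}, using the flow $\varphi(is,x)$ of \eqref{phi} in the imaginary direction. The starting identity is \eqref{5.2}, which gives $\Re{\mathfrak S}_a(\varphi(is,x))=\Re{\mathfrak S}_a(x)$ and $\Im{\mathfrak S}_a(\varphi(is,x))=\Im{\mathfrak S}_a(x)+s$. Set $\mu_b=\Im{\mathfrak S}_a(b)$. The proof has two inclusions.

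\emph{Step 1: $\Im{\mathfrak S}_a(\Omega)\subset(0,\mu_b)$.} The function $v(x)=\Im{\mathfrak S}_a(x)$ is harmonic on $\Omega$, since $\sqrt{Q}$ has a holomorphic branch there because $\Omega$ contains no turning points. On $\partial\Omega=\mathcal C_1\cup\mathcal C_2$ it takes the values $0$ and $\mu_b$. It is positive in $\Omega$ by assumption. To get $v<\mu_b$ I use the flow. Suppose some $x\in\Omega$ had $v(x)\ge\mu_b$. Flow backward along $s\mapsto\varphi(-is,x)$. By \eqref{5.2}, $v$ decreases linearly along this path, so the path must reach level $\mu_b$ and then level $0$. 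On the way it must leave $\Omega$ through the boundary. The exit point must be a point of $\mathcal C_2$ where $v=\mu_b$, and at that point the flow enters $\Omega$ from the side with larger $v$. Since $\Im{\mathfrak S}_a$ is a local coordinate near non-turning points of $\mathcal C_2$, locally $\Omega$ lies on one side of $\mathcal C_2$. Hence near $\mathcal C_2$ the sign of $v-\mu_b$ in $\Omega$ is constant. I expect this sign to be negative because $\Omega$ lies between $\mathcal C_1$ and $\mathcal C_2$. The flow from the $v>\mu_b$ side would then have to enter the region $v<\mu_b$, and the points with $v>\mu_b$ cannot lie in $\Omega$.

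\emph{Step 2: every $\sigma\in(0,\mu_b)$ is attained.} Take $x\in\Omega$ and consider $s\mapsto\varphi(is,x)$ for $s$ in the maximal interval on which it stays in $\Omega$. As in Lemma \ref{lem2.1}, the solution can only stop existing by approaching $\partial\Omega$, a turning point, or $\infty$. Along the path, $v$ runs through $v(x)+s$. The path must exit through $\mathcal C_2$ when $s\uparrow\mu_b-v(x)$ and through $\mathcal C_1$ when $s\downarrow -v(x)$, so by continuity every intermediate value in $(0,\mu_b)$ is attained.

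The main obstacle is justifying that the vertical orbit really continues until $v$ reaches the boundary value. The worry is that it might instead escape to $\infty$ or hit a turning point at an intermediate level. Escape to $\infty$ in finite $s$ is excluded by boundedness of $\varphi_z$ for large $|y|$, as in Proposition \ref{prop-decay}. An approach to a turning point $c$ at an intermediate level is excluded for interior values, because turning points lie on $\partial\Omega$, where $v\in\{0,\mu_b\}$. Having settled this, I conclude $\mu=\sup_\Omega v=\mu_b$.
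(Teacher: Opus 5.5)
Your Step 2 is essentially the paper's proof. The paper proves Lemma 5.2 ``in the same way'' as Lemma \ref{lem5.1}, i.e.\ by following the vertical orbit $s\mapsto\varphi(is,x)$. Along this orbit $\Im{\mathfrak S}_a$ changes by exactly $s$, and the orbit can leave $\Omega$ only at a level in $\{0,\mu_b\}$.

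\textbf{The gap is Step 1.} Step 2 rests on the inclusion $\Im{\mathfrak S}_a(\Omega)\subset(0,\mu_b)$. Without it, the upward orbit need not stop at level $\mu_b$; it can run to $\infty$ inside $\Omega$ with $\Im{\mathfrak S}_a\to+\infty$, exactly as in a type (1) domain. Your argument for Step 1 does not close, for three reasons.
\begin{itemize}
\item The decisive claim, that $\Omega$ lies on the side $\{v<\mu_b\}$ of $\mathcal C_2$, is only ``expected''. ``$\Omega$ lies between $\mathcal C_1$ and $\mathcal C_2$'' restates what must be shown; it is not a hypothesis. Also, a priori a domain can lie on both sides of a boundary arc.
\item Even granting that claim, no contradiction follows. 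The backward orbit from a point with $v(x)>\mu_b$ need not leave $\Omega$ through $\mathcal C_2$. It may cross the level $\mu_b$ at interior points of $\Omega$ (precisely what you want to exclude) and exit through $\mathcal C_1$.
\item The case $v(x)=\mu_b$ is not addressed at all.
\end{itemize}
No local analysis near $\mathcal C_2$ can exclude points of $\Omega$ far from $\mathcal C_2$ with $v>\mu_b$; a global argument is needed.

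\textbf{One way to supply it.}
\begin{enumerate}
\item $v(\Omega)$ is an interval containing arbitrarily small positive values, coming from points near $\mathcal C_1$. So if $v\ge\mu_b$ somewhere, there is $p\in\Omega$ with $v(p)=\mu_b$ (here $\mu_b>0$).
\item The horizontal orbit $\varphi(t,p)$ exists for all $t\in\mathbb R$ and stays in $\Omega$. It cannot reach $\infty$ in finite time. By uniqueness for $y'=1/\sqrt{Q(y)}$, it cannot meet a Stokes curve at a non-turning point. If it tended to a turning point $c$, it would itself be a Stokes curve from $c$. In either of the last two cases we would get $p\notin\Omega$.
\item From each point $\varphi(t,p)$, the vertical orbit $\varphi(t+is,p)$ exists and stays in $\Omega$ for all $s>-\mu_b$. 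This is because its level $\mu_b+s$ avoids $\{0,\mu_b\}$ for $s\neq 0$, and all turning points in $\overline{\Omega}$ lie at those levels.
\item Steps 2 and 3 define a holomorphic $\Psi$ on $\mathbb C_+$ with ${\mathfrak S}_a\circ\Psi=\mathrm{id}$. Its image is open. It is also closed in $\Omega$: if $\Psi(w_n)\to x^*\in\Omega$, then $w_n\to{\mathfrak S}_a(x^*)\in\mathbb C_+$. Hence $\Psi(\mathbb C_+)=\Omega$.
\item Now take $q\in\mathcal C_2$ not a turning point, and $x_n\in\Omega$ with $x_n\to q$. Since $\Im{\mathfrak S}_a(q)=\mu_b>0$, we get $x_n=\Psi({\mathfrak S}_a(x_n))\to\Psi({\mathfrak S}_a(q))\in\Omega$. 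This contradicts $q\notin\Omega$.
\end{enumerate}
Once $v<\mu_b$ on $\Omega$ is established this way, your Step 2 (with your existence discussion for the vertical orbit) goes through as written.
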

	 \begin{thm}    Let $\Omega$ be type (1).  
 		Then $z={\mathfrak S}_a(x)$ is a conformal bijective mapping from $\Omega$ to 
		$\mathbb{C}_{+}=\{z \in \mathbb{C}; {\rm Im}z>0\}$.   	   \label{Conf}
	   \end{thm}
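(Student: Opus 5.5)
The plan is to show directly that $z={\mathfrak S}_a(x)$ is holomorphic on $\Omega$ with nonvanishing derivative, that it maps $\Omega$ into ${\mathbb C}_+$, and that it is surjective and injective. The flow $\varphi(z,x)$ of \eqref{phi} will supply the inverse map.

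\textbf{Local conformality.} Since $\Omega$ contains no turning point and is simply connected (its boundary is the single connected curve ${\mathcal C}$), a branch of $\sqrt{Q}$ is holomorphic and nonvanishing on $\Omega$. Hence ${\mathfrak S}_a$ is holomorphic there with ${\mathfrak S}_a'(x)=\sqrt{Q(x)}\neq 0$, so it is locally conformal. By the standing assumption ${\Im}{\mathfrak S}_a>0$ on $\Omega$, the image lies in ${\mathbb C}_+$.

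\textbf{Surjectivity.} Fix $x_0\in\Omega$ with ${\mathfrak S}_a(x_0)=z_0$, and let $w\in{\mathbb C}_+$ be arbitrary. Write $w-z_0=t+is$. By Lemma \ref{lem2.1}(1), $\varphi(t,x_0)$ exists for every real $t$ and stays in $\Omega$, because ${\Im}{\mathfrak S}_a$ is invariant along the flow. Next I follow the vertical flow $\varphi(is,\cdot)$ from $x_1=\varphi(t,x_0)$. When $s\ge 0$, the proof of Lemma \ref{lem5.1} shows that this flow exists and stays in $\Omega$. When $s<0$, the imaginary part decreases to ${\Im} w>0$, so the orbit stays in a region $\{{\Im}{\mathfrak S}_a\ge \eta\}$ with $\eta>0$, and the continuation argument of Lemma \ref{lem2.1}(1) applies verbatim: $\varphi_z$ is bounded away from turning points, and a limit point cannot lie on $\partial\Omega$ since ${\Im}{\mathfrak S}_a=0$ there. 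Then \eqref{5.2} gives ${\mathfrak S}_a(\varphi(t+is,x_0))=w$, so ${\mathfrak S}_a$ is onto ${\mathbb C}_+$. This step also yields a holomorphic map $\Phi(w)=\varphi(w-z_0,x_0)$ from ${\mathbb C}_+$ to $\Omega$ satisfying ${\mathfrak S}_a\circ\Phi=\mathrm{id}$. To see that $\Phi$ is well defined and holomorphic in $w$, I use the group law $\varphi(z_1+z_2,x)=\varphi(z_1,\varphi(z_2,x))$ together with the fact that ${\mathbb C}_+$ is convex, so any two piecewise-straight flow paths are homotopic within the region where the flow exists.

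\textbf{Injectivity, the main obstacle.} Since ${\mathfrak S}_a\circ\Phi=\mathrm{id}$, it suffices to prove that $\Phi$ is onto $\Omega$, that is, $\Phi({\mathbb C}_+)=\Omega$. The set $\Phi({\mathbb C}_+)$ is open, because $\Phi$ is a local inverse of a local biholomorphism. For closedness in $\Omega$, take $x\in\Omega$ and set $w={\mathfrak S}_a(x)\in{\mathbb C}_+$. The point $\Phi(w)$ and $x$ have the same ${\mathfrak S}_a$-value. I would instead run the flow starting from $x$ itself: the point $\varphi(z_0-w,x)$ lies in $\Omega$ by the existence argument above and has ${\mathfrak S}_a$-value $z_0$. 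Then I need ${\mathfrak S}_a^{-1}(z_0)\cap\Omega=\{x_0\}$, which is the genuine difficulty. My approach is to show that the set $A=\{x\in\Omega:\ x=\Phi({\mathfrak S}_a(x))\}$ is open and closed in $\Omega$, hence all of $\Omega$ since $\Omega$ is connected. It is open by local invertibility near $x_0$ and uniqueness of solutions of the ODE. It is closed by continuity of both sides. It is nonempty since $x_0\in A$. Once $A=\Omega$, any $x,x'$ with equal ${\mathfrak S}_a$-value satisfy $x=\Phi({\mathfrak S}_a(x))=x'$, which gives injectivity, and ${\mathfrak S}_a$ is a conformal bijection onto ${\mathbb C}_+$.
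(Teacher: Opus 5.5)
Your proposal is correct and follows essentially the paper's route: your $\Phi(w)=\varphi(w-z_0,x_0)$ is the paper's globally continued inverse $\Psi$, surjectivity comes from flowing from a base point (the paper combines Lemma \ref{lem5.1} with the horizontal flow), and your open--closed argument for $A$ is the paper's ODE-uniqueness step showing $\Psi({\mathfrak S}_a(x))=x$ on $\Omega$. Two small points: openness of $A$ must be checked at every point of $A$, not only near $x_0$, which follows from the same uniqueness for $w'=\sqrt{Q(x)}/\sqrt{Q(w)}$ (or from the identity theorem applied to $\Phi\circ{\mathfrak S}_a-\mathrm{id}$ on the connected set $\Omega$); and the detour through $\Phi(\mathbb{C}_+)$ and ${\mathfrak S}_a^{-1}(z_0)$ is unnecessary.
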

\begin{proof}
		First we show  ${\mathfrak S}_a(x)$ is  surjective.     Let $z=u+iv \in \mathbb{C}_+$. $ u,v \in \mathbb{R}, v>0$.
	We have $\{{\Im} {\mathfrak S}_a(x); x\in \Omega\} =(0,+\infty)$.
         Hence there is $x_0 \in \Omega$ such that
	   ${\Im} {\mathfrak S}_a(x_0)=v$. Let  $\varphi(z, x_0)$ be that defined by \eqref{phi}. 
		It  satisfies \eqref{5.2}.
	      Take $z=u-{\Re} {\mathfrak S}_a(x_0)\in {\mathbb R}$. 
	Then
	   \begin{equation*}
	 		\begin{aligned}	 \ &			
			 {\Re}{\mathfrak S}_a\big(\varphi(u-{\Re}{\mathfrak S}_a(x_0),x_0)\big)=
		\Re {\mathfrak S}_a(x_0)+u-{\Re} {\mathfrak S}_a(x_0)=u,\\
			  &{\Im}{\mathfrak S}_a\big(\varphi(u-{\Re} {\mathfrak S}_a(x_0),x_0)\big)
				=\Im {\mathfrak S}_a(x_0)=v
	 		\end{aligned}
	 	\end{equation*}
		   and ${\mathfrak S}_a\big(\varphi(u-{\Re}{\mathfrak S}_a(x_0),x_0)\big)=u+iv=z$. 
		 ${\mathfrak S}_a(x)$ is surjective.   \par
		     Let us proceed to show  $ {\mathfrak S}_a(x)$ is injective.
			 Let  $\delta>0$ 
			and  $\mathbb{C}_{\delta}= \{z;{\Im} z>\delta\}$. 		
			Let  $x_*\in \Omega_{\delta}$ and ${\mathfrak S}_a(x_*)=z_*$. Then $\Im z_* >{\delta}$.   
			Consider 
  	   \begin{equation}
	  	\begin{aligned}
	   			 \frac{d\Psi(z)}{dz}=\frac{1}{\sqrt{Q(\Psi(z))}}\quad \Psi(z_*)=x_*.
	   	\end{aligned} \label{eq-psi}
	 \end{equation}
	   	$\Psi(z)$ exists in a neighborhood of $z=z_*$ and     	
   	\begin{equation*}
	   		\begin{aligned}
	   			\int_{x_*}^{\Psi(z)} \sqrt{Q(\tau)}d\tau =z-z_*, 
			\; {\mathfrak S}_a({\Psi(z)})-{\mathfrak S}_a({\Psi(z_*)})=z-z_*.
	   		\end{aligned}
	   	\end{equation*}
		Hence ${\mathfrak S}_a({\Psi(z)})=z$. Let $z=u+iv \in \mathbb{C}_{\delta}$, $u,v\in 
		\mathbb{R}$. Then $\Re {\mathfrak S}_a({\Psi(z)})=u$, 
		$\Im {\mathfrak S}_a({\Psi(z)})=v$ and  
		${\mathfrak S}_a({\Psi(z)}) \subset \Omega_{\delta}$ for 
		$z=u+iv \in \mathbb{C}_{\delta}$.  Suppose $\Psi(z)$ is singular at  $z=\hat{z}\in \mathbb{C}_{\delta}$.  Then
		there exists a sequence $\{z_n\in \mathbb{C}_{\delta}; n=1,2,\cdots \}$ and $c$ such that $\lim_{\to \infty}z_n=\hat{z}$
		and  $\lim_{n \to \infty}\Psi(z_n)=c$.  Since $\Psi'(z_n)$ is bounded, $c\not=\infty$. It means that, by solving
		$ \frac{d\Psi(z)}{dz}=\frac{1}{\sqrt{Q(\Psi(z))}}\; \Psi(\hat{z})=c$, $\Psi(z)$ can be extended in a neighborhood of 
		$z=\hat{z}$ and is not singular at  $z=\hat{z}$. Consequently $\Psi(z)$ exists in 
		 $ \mathbb{C}_{\delta}$ for any $ \delta>0$, hence in $\mathbb{C}_{+}$.
	 We have
	   	\begin{equation}
	   		\begin{aligned}
	   		 \frac{d}{dx}\Psi\big({\mathfrak S}_a(x)\big)=\Psi'\big({\mathfrak S}_a(x)\big)
				{\mathfrak S}'_a(x)= \frac{\sqrt{Q(x)}}{\sqrt{Q\big(\Psi({{\mathfrak S}_a(x)})\big)}}.	   		 
	   		\end{aligned}  \label{Psi}
	   	\end{equation}
		  	Let us consider $w'(x)=\frac{\sqrt{Q(x)}}{\sqrt{Q(w)}}$ with $ w(x_*)=x_*$.
				Then $w(x)=x$ is its solution.	  
		   	$\Psi\big({\mathfrak S}_a(x))$ satisfies \eqref{Psi} and  
				$\Psi\big({\mathfrak S}_a(x_*))=\Psi\big(z_*)=x_*$.  Hence 
				$\Psi\big({\mathfrak S}_a(x))=x$ and  ${\mathfrak S}_a(x)$ is injective.
	\end{proof}
		We have in the same way 
	\begin{thm}
              Let $\Omega$ be a type (2). 
		Then ${\mathfrak S}_a(x)$ is a conformal bijective mapping from $\Omega$ to 
		$\{z \in \mathbb{C}; 0<\Im z<\mu\}$.   	   \label{th5.4}
	   \end{thm}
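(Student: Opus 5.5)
I will follow the proof of Theorem \ref{Conf} step by step, with the half plane $\mathbb{C}_+$ replaced by the strip $\Sigma_\mu=\{z\in\mathbb{C};\,0<\Im z<\mu\}$, where $\mu=\Im{\mathfrak S}_a(b)$. The map ${\mathfrak S}_a$ is holomorphic on $\Omega$ with ${\mathfrak S}_a'=\sqrt{Q}\neq0$ there, so it is locally conformal. Lemma \ref{lem5.2} gives ${\mathfrak S}_a(\Omega)\subset\Sigma_\mu$ and $\{\Im{\mathfrak S}_a(x);\,x\in\Omega\}=(0,\mu)$. It therefore remains to prove surjectivity and injectivity.

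\emph{Surjectivity.} Let $z=u+iv\in\Sigma_\mu$. By Lemma \ref{lem5.2} there is $x_0\in\Omega$ with $\Im{\mathfrak S}_a(x_0)=v$. By Lemma \ref{lem2.1}-(1), $\varphi(t,x_0)$ exists for all real $t$. Moreover, by \eqref{2.5} it stays on the level set $\Im{\mathfrak S}_a=v$, which lies in $\Omega$. Taking $t=u-\Re{\mathfrak S}_a(x_0)$ and using \eqref{5.2}, we get ${\mathfrak S}_a(\varphi(t,x_0))=u+iv$, as in the type (1) case.

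\emph{Injectivity.} Fix $\delta>0$ small, a point $x_*\in\Omega_\delta$, and put $z_*={\mathfrak S}_a(x_*)$. Solve \eqref{eq-psi} with $\Psi(z_*)=x_*$; locally this gives ${\mathfrak S}_a(\Psi(z))=z$. I would continue $\Psi$ analytically along paths in the substrip $\Sigma_{\mu,\delta}=\{\delta<\Im z<\mu-\delta\}$. As long as $\Psi$ exists it satisfies $\Psi(z)\in\Omega_\delta\subset D_{\epsilon}$, so $|\Psi'|=|Q(\Psi)|^{-1/2}\le C_\epsilon^{1/2}$ is bounded. Now suppose $\Psi$ became singular at some $\hat z$ approached from inside $\Sigma_{\mu,\delta}$. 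Boundedness of $\Psi'$ forces $\Psi(z_n)\to c\neq\infty$, and $c\in\overline{\Omega_\delta}\subset D_\epsilon$. Solving the ODE from $c$ then extends $\Psi$ past $\hat z$, a contradiction. Since $\Sigma_{\mu,\delta}$ is simply connected, the monodromy theorem gives a single-valued $\Psi$ on $\Sigma_{\mu,\delta}$. Letting $\delta\to0$ gives $\Psi$ on $\Sigma_\mu$; compatibility for different $\delta$ follows from uniqueness of continuation on the nested strips. Finally, $w=\Psi({\mathfrak S}_a(x))$ satisfies $w'=\sqrt{Q(x)}/\sqrt{Q(w)}$ with $w(x_*)=x_*$. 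This ODE also has the solution $w=x$, so $\Psi\circ{\mathfrak S}_a=\mathrm{id}$ on the connected set $\Omega$, first near $x_*$ and then everywhere by the identity theorem. Hence ${\mathfrak S}_a$ is injective, with inverse $\Psi$.

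\emph{Main obstacle.} The delicate point is the one specific to type (2): the strip has a finite upper edge $\Im z=\mu$ lying over $\mathcal{C}_2$, and this curve contains the turning point $b$ and possibly others. The continuation argument for $\Psi$ must not run into these turning points. This is exactly why I restrict to $\Im z<\mu-\delta$ and use the fact, stated in Section 2, that $\Omega_\delta\subset D_\epsilon$ for some $\epsilon>0$. I would verify this containment carefully, i.e.\ that turning points on $\partial\Omega$ have $\Im{\mathfrak S}_a\in\{0,\mu\}$. I would also check that the branch of $\sqrt{Q}$ is consistent along continuation, which holds because $\Omega$ contains no turning points and the continuation stays inside $\Omega$.
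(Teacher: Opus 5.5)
Your proposal is correct and is essentially the paper's own argument. The paper proves Theorem \ref{th5.4} ``in the same way'' as Theorem \ref{Conf}: surjectivity comes from flowing along the real orbit $\varphi(t,x_0)$ on the level line $\Im{\mathfrak S}_a=v$, and injectivity comes from continuing the local inverse $\Psi$ of \eqref{eq-psi} using the bound on $\Psi'$ and then applying ODE uniqueness to get $\Psi\circ{\mathfrak S}_a=\mathrm{id}$. Your restriction to the substrips $\delta<\Im z<\mu-\delta$ (matching $\Omega_\delta$) and your explicit use of the monodromy theorem just make precise what the paper leaves implicit in the type (2) case.
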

	      $x=\Psi(z) \; ( z\in { \mathbb C} _+)$  is the inverse of $z={\mathfrak S}_a(x)\ ( x \in \Omega)$. 
    Let $\varphi(z,x)$ be a solution of \eqref{phi} and
		\begin{equation}
	   		\begin{aligned}
	   		\Sigma(x)=\{z; 0<\Im (z+ {\mathfrak S}_a(x))<\mu  \}.	   
				\end{aligned}		 
      \end{equation}
		It follows from ${\mathfrak S}_a(\varphi(z,x))={\mathfrak S}_a(x)+z$ that
		 $\varphi(z,x)=\Psi\big({\mathfrak S}_a(x))+z \big)$ 
		and $\varphi(z,x)$ is holomorphic in $	\Sigma(x)$. 
	\section{\normalsize  Appendix}
			{\it  Proof of Proposition  \ref{prop2.4}. } 
		We have
     	\begin{equation*}
     		\begin{aligned}\ &
    		 \frac{d}{dx} (Gf)(x,h)=    \sqrt{Q(x)} '\int_{-\infty}^{0} 
  		  \exp(\frac{2t}{h})\frac{f(\varphi(t,x))}{{Q(\varphi(t,x))}}dt \\  &+ 
 	   \sqrt{Q(x)} \int_{-\infty}^{0} 
    		  \exp(\frac{2t}{h})\frac{\partial}{\partial x}\Big(\frac{f(\varphi(t,x))}{{Q(\varphi(t,x))}}\Big)dt \\
	= & \frac{Q'(x)}{2Q(x)}( Gf)(x,h)+  \sqrt{Q(x)} \int_{-\infty}^{0} 
    		  \exp(\frac{2t}{h})\frac{\partial}{\partial x}\Big(\frac{f(\varphi(t,x))}{{Q(\varphi(t,x))}}\Big)dt 
   		\end{aligned}
    	\end{equation*}
	and
	\begin{equation*}
     		\begin{aligned}\ &
    		 \frac{d}{dx} (Gf)(x,h)-\frac{Q'(x)}{2Q(x)}( Gf)(x,h)= \int_{-\infty}^{0}\exp(\frac{2t}{h}) (I(t,x)+II(t, x))dt
   		\end{aligned}
    	\end{equation*}
     	\begin{equation*} \left \{
			\begin{aligned}
			I(t,x)=&	\sqrt{Q(x)}	\frac{f'(\varphi(t,x))\varphi_x (t,x) }{{Q(\varphi(t,x))}} \\
			II(t,x)=& -\sqrt{Q(x)}(\frac{Q'(\varphi(t,x))\varphi_x(t,x)}{{Q(\varphi(t,x)^2}}\Big) f(\varphi(t,x)).
			\end{aligned}\right .
		\end{equation*}	
	It follows from $ \varphi_x(t,x)=\frac{\sqrt{Q(x)}}{\sqrt{Q(\varphi(t,x))}}=\sqrt{Q(x)}\ \varphi_t(t,x)$ that
		\begin{equation*}  \left \{
			\begin{aligned} &
		I(t,x)=\frac { Q(x)f'(\varphi(t,x)\varphi_t(t,x)}{{Q(\varphi(t,x))}} 
		=\frac{Q(x)\frac{\partial}{\partial t}f(\varphi(t,x))} {{Q(\varphi(t,x))}} \\
	&	II(t,x	=-{Q(x)}(\frac{Q'(\varphi(t,x))\varphi_t(t,x)}{Q(\varphi(t,x)^{2}}\big)f(\varphi(t,x))
			\end{aligned} \right.
		\end{equation*}			
    and by integration by parts
		\begin{equation*}
			\begin{aligned} &
     		   \int_{-\infty}^{0}\exp(\frac{2t}{h})I(t,x)dt
     		  = -Q(x)\int_{-\infty}^{0}\frac{\partial}{\partial t}
     		   \big(\frac{\exp(\frac{2t}{h})}{Q(\varphi(t,x))}\big)f(\varphi(x,t))dt + f(x).
     		\end{aligned}
     	\end{equation*}
     	 We have  
  	\begin{equation*}
     		\begin{aligned}\ &
     	-Q(x)	\frac{\partial}{\partial t}\Big(\frac{\exp(\frac{2t}{h})}{Q(\varphi(t,x))}\big)
    			f(\varphi(x,t))\\	=&\exp(\frac{2t}{h})\big(\frac{-2Q(x)}{h Q(\varphi(t,x))}
    			+Q(x)\frac{Q'(\varphi(t,x))\varphi_t(t,x)}{Q(\varphi(t,x))^2}\Big)f(\varphi(x,t)),
       		\end{aligned}
     	\end{equation*}
		\begin{equation*}
			\begin{aligned}
     	\int_{-\infty}^{0}\exp(\frac{2t}{h})I(t,x)dt= &-\dfrac{2\sqrt{Q(x)}}{h}(Gf)(x,h) \\
  		  + Q(x) \int_{-\infty}^{0}\exp(\frac{2t}{h}) &\frac{Q'(\varphi(t,x))\varphi_t(t,x)}{Q(\varphi(t,x))^2}\Big)f(\varphi(t,x)dt+f(x)
			\end{aligned}
		\end{equation*}
	and 
		\begin{equation*}
			\begin{aligned}
			\int_{-\infty}^{0}\exp(\frac{2t}{h}) (I(t,x)+II(t, x))dt=-\dfrac{2\sqrt{Q(x)}}{h}(Gf)(x,h)+f(x).
			\end{aligned}
		\end{equation*}			
		Thus we get 
			$\dfrac{d}{dx} (Gf)(x,h)+\big(\dfrac{2\sqrt{Q(x)}}{h}-\dfrac{Q'(x)}{ 2Q(x)}\big)(Gf)(x,h)=f(x)$.    \qed
	
\end{document}